\numberwithin{equation}{section}
\theoremstyle{plain}
\newtheorem{thm}[equation]{Theorem}
\newtheorem{prop}[equation]{Proposition}
\newtheorem{lem}[equation]{Lemma}
\newtheorem{cor}[equation]{Corollary}
\newtheorem{defn}[equation]{Definition}
\newtheorem*{prop*}{Proposition}
\newtheorem*{thm*}{Theorem}
\newtheorem*{thma*}{Theorem A}
\newtheorem*{thmb*}{Theorem B}
\newtheorem*{thmc*}{Theorem C}
\theoremstyle{remark}
\newtheorem{exm}[equation]{Example}
\newtheorem{rmk}[equation]{Remark}
\newenvironment{enumroman}
{\begin{enumerate}}
{\end{enumerate}}
\DeclareMathOperator{\ann}{ann}
\DeclareMathOperator{\gdeg}{gdeg}
\DeclareMathOperator{\End}{End}
\DeclareMathOperator{\Hom}{Hom}
\DeclareMathOperator{\nrd}{nrd}
\DeclareMathOperator{\rk}{rk}
\DeclareMathOperator{\Spec}{Spec}
\DeclareMathOperator{\trd}{trd}
\DeclareMathOperator{\Tr}{Tr}
\newcommand{\F}{\mathbb F}
\newcommand{\Z}{\mathbb Z}
\newcommand{\eps}{\epsilon}
\newcommand{\fraka}{\mathfrak{a}}
\newcommand{\frakm}{\mathfrak{m}}
\newcommand{\frakp}{\mathfrak{p}}
\newcommand{\frakq}{\mathfrak{q}}
\begin{document}

\title[Rings of low rank]{Rings of low rank with a standard involution}

\author{John Voight}
\address{Department of Mathematics and Statistics, University of Vermont, 16 Colchester Ave, Burlington, VT 05401, USA}
\email{jvoight@gmail.com}
\date{\today}
\thanks{\emph{Subject classification:} 16G30, 11E20, 16W10}

\begin{abstract}
We consider the problem of classifying (possibly noncommutative) $R$-algebras of low rank over an arbitrary base ring $R$.  We first classify algebras by their degree, and we relate the class of algebras of degree $2$ to algebras with a standard involution.  We then investigate a class of exceptional rings of degree $2$ which occur in every rank $n \geq 1$ and show that they essentially characterize all algebras of degree $2$ and rank $3$.
\end{abstract}

\maketitle

Let $R$ be a commutative Noetherian ring (with $1$) which is connected, so that $R$ has only $0,1$ as idempotents (or equivalently that $\Spec R$ is connected).  Let $B$ be an algebra over $R$, an associative ring with $1$ equipped with an embedding $R \hookrightarrow B$ of rings (mapping $1 \in R$ to $1 \in B$) whose image lies in the center of $B$; we identify $R$ with its image in $B$.  Assume further that $B$ is a finitely generated, projective $R$-module.  Recall that a finitely generated module is projective if and only if it is locally free; we define the \emph{rank} of $B$ to be the common rank of its localizations.

The problem of classifying algebras $B$ of low rank has an extensive history.  The identification of quadratic rings over $\Z$ by their discriminants is classical and goes back as far as Gauss.  Commutative rings of rank at most $5$ over $R=\Z$ have been classified by Bhargava \cite{Bhargava1}, building on work of others; this beautiful work has rekindled interest in the subject and has already seen many applications.  Progress on generalizing these results to arbitrary commutative base rings $R$ (or even arbitrary base schemes) has been made by Wood \cite{MWood}.  A natural question in this vein is to consider noncommutative algebras of low rank, and in this article we treat algebras of rank at most $3$.  

The category of $R$-algebras (with morphisms given by isomorphisms) has a natural decomposition by degree.  The \emph{degree} of an $R$-algebra $B$, denoted $\deg_R(B)$, is the smallest positive integer $n$ such that every $x \in B$ satisfies a monic polynomial of degree $n$.  Any \emph{quadratic} algebra $B$, i.e.\ an algebra of rank $2$, is necessarily commutative (see Lemma \ref{quad2}) and has degree $2$.  Moreover, a quadratic algebra has a unique $R$-linear (anti-)involution $\overline{\phantom{x}}:B \to B$ such that $x\overline{x} \in R$ for all $x \in B$, which we call a \emph{standard involution}.  

The situation is much more complicated in higher rank.  In particular, the degree of $B$ does not behave well with respect to base extension (Example \ref{notbehavewell}).  We define the \emph{geometric degree} of $B$ to be the maximum of $\deg_S(B \otimes_R S)$ with $R \to S$ a homomorphism of (commutative) rings.  Our first main result is as follows (Corollary \ref{cor10}).

\begin{thma*}
Let $B$ be an $R$-algebra and suppose there exists $a \in R$ such that $a(a-1)$ is a nonzerodivisor.  Then the following are equivalent.
\begin{enumroman}
\item $B$ has degree $2$;
\item $B$ has geometric degree $2$;
\item $B \neq R$ has a standard involution.
\end{enumroman}
\end{thma*}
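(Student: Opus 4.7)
The three implications split naturally. For (ii) $\Rightarrow$ (i), taking $S = R$ in the definition of geometric degree gives $\deg_R(B) \le 2$, and since $B = R$ has geometric degree $1$ we conclude $\deg_R(B) = 2$. For (iii) $\Rightarrow$ (ii), a standard involution furnishes each $x \in B$ with the monic quadratic $X^2 - (x + \bar{x}) X + x \bar{x} = 0$ (the identity $(x+1)\overline{(x+1)} \in R$ forces $x + \bar{x} \in R$ once $x \bar{x}$ and $1 \in R$ are in $R$), and the involution base-changes along any $R \to S$, giving $\deg_S(B \otimes_R S) \le 2$ uniformly; combined with $B \neq R$ this yields $\gdeg_R(B) = 2$.

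The substantive implication is (i) $\Rightarrow$ (iii). Given $\deg_R(B) = 2$ (so $B \neq R$), I plan to construct an $R$-linear trace $t\colon B \to R$ and quadratic norm $n\colon B \to R$ with $x^2 - t(x) x + n(x) = 0$ for every $x \in B$. The standard involution is then $\bar{x} := t(x) - x$: directly $x \bar{x} = t(x) x - x^2 = n(x) \in R$; the uniqueness-derived identity $t(r) = 2r$ for $r \in R$ (from the relation $(X - r)^2 = 0$) gives $\overline{\bar{x}} = x$; and antimultiplicativity $\overline{xy} = \bar{y} \bar{x}$ reduces to the linearization identity $xy + yx = t(y) x + t(x) y + \bigl( n(x) + n(y) - n(x+y) \bigr)$ obtained by expanding $(x+y)^2 - x^2 - y^2$ through the three quadratics for $x$, $y$, $x+y$.

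The main obstacle is showing $t$ is well-defined and $R$-linear, and this is where the hypothesis on $a$ enters. Any two quadratics $x^2 = t_1 x - n_1 = t_2 x - n_2$ satisfy $(t_1 - t_2) x = n_1 - n_2 \in R$, and I plan to leverage the hypothesis by running the same analysis for the shifted and scaled elements $x + a$ and $ax$: these give parallel $R$-linear relations in which $a$ and $a - 1$ appear as coefficients, and their nonzerodivisor property is exactly what is needed to cancel and force $t_1 = t_2$, $n_1 = n_2$. Scalar homogeneity $t(cx) = c t(x)$ is then immediate from $(cx)^2 = c^2 x^2$. The hardest step is additivity: I expect to obtain it by expanding $(ax + y)^2$ in two ways — once via the quadratic for $ax + y$, and once via $a^2 x^2 + a(xy + yx) + y^2$ reduced through the quadratics for $x$, $y$, and $x + y$ — and extracting a relation of the form $(a-1)\bigl( t(x+y) - t(x) - t(y) \bigr) \cdot z = 0$ for a nonzerodivisor $z$, from which additivity follows. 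Counterexamples like $B = \Z/4 \times \Z/4$ (where no $a$ satisfies the hypothesis and the would-be trace of $(2,0)$ is ambiguous between $0$ and $2$) confirm that the hypothesis is tight and pinpoint where the argument must be brought to bear.
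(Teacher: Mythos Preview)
Your additivity plan for (i) $\Rightarrow$ (iii) is essentially the paper's: expand $(x+ay)^2$ two ways and extract the relation $a(a-1)\bigl(t(x+y)-t(x)-t(y)\bigr)=0$. But the well-definedness step, where you propose to pin down $t(x)$ for \emph{every} $x\in B$ by manipulating $x+a$ and $ax$, does not work, and this is a genuine gap.

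Take $R=\Z/9\Z$ (so $a=2$ gives $a(a-1)=2\in R^\times$) and $B=R[\eps]/(\eps^2)$. The element $x=3\eps$ satisfies $x^2=0$ and $3x=0$, so $X^2$, $X^2-3X$, and $X^2-6X$ are all valid quadratics for $x$. Shifting by $a$ or scaling by $a$ does not resolve this: the ambiguity $(t_1-t_2)x\in R$ simply rescales or translates along with $x$, and no factor of $a$ or $a-1$ ever isolates $t_1-t_2$ alone. So your claim that the nonzerodivisor hypothesis ``forces $t_1=t_2$'' fails already here. The same obstruction kills ``scalar homogeneity is immediate from $(cx)^2=c^2x^2$'': you cannot deduce $t(3\eps)=3t(\eps)=0$ from the quadratic of $3\eps$ alone.

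The fix is the paper's: localize so $B$ is free, define $\overline{x_i}$ only on basis elements $x_i$ (where $1,x_i$ extend to a basis, so $R[x_i]$ is free and the quadratic is genuinely unique), and \emph{extend $R$-linearly}. This makes $t$ linear by fiat; the burden shifts to showing $n(x)=x\overline x\in R$ for all $x$, which is exactly where your $(x+ay)^2$ computation enters, now carried out for $x,y$ part of a basis so that coefficients in $B/R$ can be read off unambiguously.

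A second, smaller gap: antimultiplicativity does \emph{not} reduce to the linearization identity $xy+yx=t(y)x+t(x)y+\cdots$. That identity controls $xy+yx$, but $\overline{xy}=\bar y\,\bar x$ is equivalent to an identity for $t(xy)$, which needs a separate argument. The paper handles this (Lemma~\ref{isantiauto}) by arranging that $R[xy]$ is free---replacing $x$ by $x-c+1$ so that $1,xy$ lie in a basis---and then comparing the two quadratics $T^2-(xy+\overline{xy})T+\cdots$ and $T^2-(xy+\bar y\bar x)T+\cdots$ satisfied by $xy$, invoking uniqueness once more.
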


Note that if $2$ is a nonzerodivisor in $R$ then we can take $a=-1$ in the above theorem.

In view of Theorem A, it is natural then to consider the class of $R$-algebras $B$ equipped with a standard involution which is then necessarily unique (Corollary \ref{uniqstd}).  For such an algebra $B$, we define the \emph{reduced trace} $\trd:B \to R$ by $x \mapsto x+\overline{x}$ and the \emph{reduced norm} by $\nrd:B \to R$ by $x \mapsto x\overline{x}$; then every element $x \in B$ satisfies the polynomial $\mu(x;T)=T^2-\trd(x)T+\nrd(x)$.

Commutative algebras with a standard involution can be easily characterized: for example, if $2$ is a nonzerodivisor in $R$ and $B$ is a commutative $R$-algebra with a standard involution, then either $B$ is a quadratic algebra or $B$ is a quotient of an algebra of the form $R[x_1,\dots,x_n]/(x_1,\dots,x_n)^2$ (more generally, see Proposition \ref{commutativecase}).

There is a natural class of noncommutative algebras equipped with a standard involution which occur in every rank $n \geq 1$, defined as follows.  Let $M$ be a projective $R$-module of rank $n-1$ and let $t:M \to R$ be an $R$-linear map.  Then we give the $R$-module $B=R \oplus M$ the structure of an $R$-algebra by defining the multiplication rule $xy = t(x)y$ for $x,y \in M$.  The map $x \mapsto \overline{x}=t(x)-x$ is a standard involution on $B$.  An \emph{exceptional ring} is an $R$-algebra $B$ with the property that there is a left ideal $M \subset B$ such that $B=R \oplus M$ and the map $M \to \Hom_R(M,B)$ given by left multiplication factors through a linear map $t:M \to R$.  

Our second main result (Theorem \ref{rank3}) is as follows.

\begin{thmb*}
An $R$-algebra $B$ of rank $3$ has a standard involution if and only if it is an exceptional ring.
\end{thmb*}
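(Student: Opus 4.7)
The forward direction is by direct construction. Given exceptional data $(M, t)$ with $B = R \oplus M$, the map $\overline{r + m} = (r + t(m)) - m$ for $r \in R$, $m \in M$ is an $R$-linear anti-involution of $B$ for which $y \bar y \in R$ holds for every $y \in B$; this gives the required standard involution.

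For the converse, I would work locally, where $B$ is free of rank $3$ with a basis $1, e_1, e_2$. Writing $e_i^2 = t_i e_i - n_i$ (with $t_i = \trd(e_i)$, $n_i = \nrd(e_i)$) and $e_1 e_2 = a + b e_1 + c e_2$, the key computation is imposing the associativity identities $e_1(e_1 e_2) = e_1^2 e_2$ and $(e_1 e_2) e_2 = e_1 e_2^2$: these force
\[ a = -bc, \qquad c^2 - t_1 c + n_1 = 0, \qquad b^2 - t_2 b + n_2 = 0. \]
In particular $(e_1 - c)(e_2 - b) = 0$, and both $e_1 - c$ and $e_2 - b$ have reduced norm zero. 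This structural content makes it possible to define two candidate $R$-linear maps $\epsilon, \epsilon' \colon B \to R$ by
\[ \epsilon(e_1) = t_1 - c,\ \epsilon(e_2) = b, \qquad \epsilon'(e_1) = c,\ \epsilon'(e_2) = t_2 - b. \]
A routine check using the identities above shows that both are $R$-algebra homomorphisms, and they satisfy the Vieta-type relations $\epsilon + \epsilon' = \trd$ and $\epsilon \epsilon' = \nrd$; equivalently, $\epsilon(x)$ and $\epsilon'(x)$ are the two roots of $\mu(x; T) = T^2 - \trd(x) T + \nrd(x)$ in $R$.

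Setting $M = \ker \epsilon$ gives a rank-$2$ two-sided ideal with $B = R \oplus M$, generated for example by $\bar e_1 - c$ and $e_2 - b$. The exceptional multiplication rule is then a short direct check: for each generator $g \in \{\bar e_1 - c, e_2 - b\}$ and each $e_i$, the identity $e_i \cdot g = \epsilon'(e_i) g$ holds, so by $R$-linearity $y \cdot x = \epsilon'(y) x$ for every $y \in B$ and $x \in M$. Restricting to $y \in M$, where $\epsilon(y) = 0$, yields $\epsilon'(y) = \trd(y)$, and hence $yx = \trd(y) x$ for $y, x \in M$ --- exactly the exceptional rule with $t = \trd|_M$.

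The main obstacle I foresee is globalization. The augmentations $\epsilon, \epsilon'$ and hence the ideal $M$ are defined via a local basis, and even intrinsically there is a natural ambiguity (locally two choices, swapping $\epsilon$ and $\epsilon'$). One must verify that the local exceptional structure descends to $\Spec R$, either by giving an intrinsic description of $M$ (for instance as the kernel of a canonically chosen augmentation) or by a descent argument; along the way one must also carefully track the nonzero-divisor conditions that can arise when working over an arbitrary base ring.
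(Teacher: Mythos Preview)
Your argument is correct and takes a genuinely different route from the paper's.  The paper first normalizes to the Gross--Lucianovic cubic form $(C)$, then uses $\deg_R(B)=2$ to kill two of the structure constants and the relation $\overline{ij}=\overline{j}\,\overline{i}$ to pin down $ji$; a final basis change $i\mapsto \overline{i}$ produces the ``good basis'' multiplication table $(NC)$, which is visibly exceptional.  Your approach instead extracts two $R$-algebra augmentations $\epsilon,\epsilon'$ (the two roots of $\mu(x;T)$) directly from the associativity constraints on $e_1e_2$, and realizes the exceptional ideal as $M=\ker\epsilon$.  This is more conceptual: it identifies the exceptional structure with a canonical factoring of the reduced characteristic polynomial, whereas the paper's computation gives an explicit normal form.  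Both yield the same $M$ (your basis $\overline{e_1}-c,\ e_2-b$ is exactly a good basis in the paper's sense after the same translation).

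Your stated worry about globalization is less serious than you suggest.  The two augmentations are \emph{not} interchangeable: one has $\epsilon'=\epsilon\circ\overline{\phantom{x}}$, so $\ker\epsilon'=\overline{M}$, and while $M=\ker\epsilon$ satisfies the \emph{left} rule $yx=\trd(y)x$, its conjugate $\overline{M}$ satisfies the corresponding \emph{right} rule.  Since ``exceptional'' is defined via left multiplication, only $M$ qualifies, and it is therefore canonical (this is Lemma~\ref{canonsplitBRM}).  A direct check confirms that your formula for $\epsilon$ is insensitive to the choice of basis $e_1,e_2$, so $\epsilon$ and $M$ glue over $\Spec R$; alternatively one may simply invoke Lemma~\ref{exceptlocal}, which is exactly how the paper passes from the free case to the projective case.
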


The results of this paper will be further used in an upcoming work \cite{Voightquat} which investigates algebras of rank $4$ with a standard involution, in an attempt to characterize quaternion rings over an arbitrary base ring.

This article is organized as follows.  We begin (\S 1) with some preliminary notions and define the degree of an algebra.  We then explore the relationship between algebras of degree $2$ and those with a standard involution and then prove Theorem A (\S 2).  Next, we investigate the class of commutative algebras with a standard involution and define exceptional rings (\S 3).  We then classify algebras of rank $3$, relating them to certain endomorphism rings of flags and prove Theorem B (\S 4).  

The author would like to thank Hendrik Lenstra for his suggestions and comments which helped to shape this research.  We are particularly indebted to Melanie Wood who made many helpful remarks and suggestions.  We would also like to thank David Speyer and the referee.

\section{Degree} \label{secdegree}

In this section, we discuss the notion of the degree of an algebra, generalizing the notion from that over a field.  We refer the reader to Scharlau \cite[\S 8.11]{Scharlau} for an alternative approach.  

Throughout this article, let $R$ be a commutative, connected Noetherian ring and let $B$ be an algebra over $R$, which as in the introduction is defined to be an associative ring with $1$ equipped with an embedding $R \hookrightarrow B$ of rings.  We assume further that $B$ is finitely generated, projective $R$-module.  For a prime $\frakp$ of $R$, we denote by $R_\frakp$ the localization of $R$ at $\frakp$; we abbreviate $B_\frakp = B \otimes_R R_\frakp$ and for $x \in B$ we write $x_\frakp = x \otimes 1 \in B_\frakp$.  Since $B$ is projective, we have that $B_\frakp$ is locally free of finite rank $n$, which we suppose throughout is independent of $\frakp$, and we define the \emph{rank} of $B$ to be this common rank and denote $n=\rk_R(B)$.

\begin{rmk}
There is no loss of generality in working with connected rings, since for an arbitrary ring $R$ one has a statement for each of the connected components of $\Spec R$.  Furthermore, one may work with non-Noetherian rings by the process of Noetherian reduction, by finding a Noetherian subring $R_0 \subset R$ and an $R_0$-algebra $B_0$ such that $B_0 \otimes_{R_0} R \cong B$.
\end{rmk}

\begin{rmk}
For the questions we consider herein, we work (affinely) with algebras over base rings.  If desired, one could without difficulty extend our results to an arbitrary (separated) base scheme by the usual patching arguments.
\end{rmk}

We begin with a preliminary lemma.

\begin{lem} \label{dirsummand}
$R$ is a direct summand of $B$.
\end{lem}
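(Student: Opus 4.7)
The plan is to split the inclusion $R \hookrightarrow B$ by showing the quotient $B/R$ is projective, so that the short exact sequence
\[ 0 \to R \to B \to B/R \to 0 \]
splits; this immediately makes $R$ a direct summand of $B$ as an $R$-module. Since $R$ is Noetherian and $B$ is finitely generated over $R$, the quotient $B/R$ is finitely generated, so it suffices to show $B/R$ is locally free of constant rank.

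For this, I would localize at an arbitrary prime $\frakp$ of $R$ and work with the local ring $R_\frakp$ with maximal ideal $\frakm := \frakp R_\frakp$. Because $B$ is projective of constant rank $n$, the localization $B_\frakp$ is free of rank $n$ over $R_\frakp$. The key observation is that the image of $1 \in B_\frakp$ in the residue algebra $B_\frakp/\frakm B_\frakp$ is nonzero (it is the identity of this nonzero $R_\frakp/\frakm$-algebra). I would then extend $\overline{1}$ to a basis $\overline{1}, \overline{e}_2, \dots, \overline{e}_n$ of $B_\frakp/\frakm B_\frakp$ as a vector space over the residue field, lift to $1, e_2, \dots, e_n \in B_\frakp$, and invoke Nakayama's lemma: the corresponding map $R_\frakp^n \to B_\frakp$ is surjective modulo $\frakm$ hence surjective, and since both source and target are free of rank $n$ over a local ring, it is an isomorphism. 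Thus $R_\frakp \cdot 1$ is a direct summand of $B_\frakp$, and the complement $(B/R)_\frakp \cong B_\frakp / R_\frakp$ is free of rank $n-1$.

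Having done this at every prime, $B/R$ is a finitely generated $R$-module whose localizations are all free of rank $n-1$, so $B/R$ is locally free of constant rank $n-1$ and hence projective. The exact sequence above splits, giving the desired direct sum decomposition $B \cong R \oplus (B/R)$ of $R$-modules.

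The only mildly delicate point is the Nakayama step, where one must verify that the element $1 \in B$ survives in the residue algebra $B_\frakp/\frakm B_\frakp$ so that it can be part of a basis; this uses precisely the hypothesis that the embedding $R \hookrightarrow B$ sends $1$ to $1$ and that $R$ (and therefore $R_\frakp/\frakm$) is connected and nonzero, so $B_\frakp/\frakm B_\frakp$ is a nonzero algebra over the residue field. Everything else is a routine local-to-global argument about finitely generated projective modules over a Noetherian base.
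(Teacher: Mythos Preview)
Your proposal is correct and follows essentially the same route as the paper: localize at a prime, pass to the residue field, include $1$ in a basis there, lift via Nakayama to get a free basis of $B_\frakp$ containing $1$, conclude that $B/R$ is locally free of constant rank (hence projective), and split the resulting short exact sequence. Your write-up is simply a more detailed version of the paper's argument, with the same key idea and no substantive differences.
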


\begin{proof}
For every prime ideal $\frakp$ of $R$, there exists a basis for the algebra $B_\frakp/\frakp B_\frakp$ over the field $R_\frakp/\frakp R_\frakp$ which includes $1$, and by Nakayama's lemma this lifts to a basis for $B_\frakp$.  In particular, the quotient $B/R$ is locally free and finitely generated of constant rank (since $B$ is finitely generated over $R$, and $R$ is connected) hence projective, which implies that $B/R$ and hence $R$ is a direct summand of $B$.  
\end{proof}

Every element $x \in B$ satisfies a monic polynomial with coefficients in $R$ by the (generalized) Cayley-Hamilton theorem; indeed, by the ``determinant trick'', this polynomial has degree bounded by the minimal number of generators for $B$ as an $R$-module \cite[Theorem IV.17]{Macdonald} (see also the determinant-trace polynomial \cite[Section V.E]{Macdonald}).  In fact, one can extend the notion of characteristic polynomial directly as follows.

\begin{lem} \label{charpoly}
For every $x \in B$, there exists a unique monic polynomial $\chi(x;T) \in R[T]$ of degree $n=\rk(B)$ with the property that for every prime $\frakp$ of $R$, the characteristic polynomial of left multiplication by $x$ on $B_\frakp$ is equal to $\chi(x;T)_\frakp \in R_\frakp[T]$.  Moreover, we have $\chi(x;x)=0$.
\end{lem}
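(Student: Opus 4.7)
The plan is to construct $\chi(x;T)$ locally on a Zariski cover where $B$ is free, and glue the resulting local polynomials coefficient by coefficient. Since $B$ is finitely generated projective of rank $n$, there is a finite collection of elements $f_1,\dots,f_k \in R$ generating the unit ideal such that each localization $B_{f_i}$ is free of rank $n$ over $R_{f_i}$; I would fix such a cover at the outset, and consider left multiplication by $x$ as an $R$-linear endomorphism $\ell_x:B\to B$.

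On each $R_{f_i}$, I choose a basis of $B_{f_i}$ and take the classical characteristic polynomial $\chi_i(x;T)=\det(T\cdot I_n - A_i)\in R_{f_i}[T]$, where $A_i$ is the matrix of $(\ell_x)_{f_i}$. Conjugation invariance of the characteristic polynomial makes $\chi_i$ independent of the chosen basis, and on the overlap $R_{f_i f_j}$ both $\chi_i(x;T)$ and $\chi_j(x;T)$ compute the characteristic polynomial of the same endomorphism $\ell_x\otimes 1$ on the free module $B_{f_if_j}$, hence coincide. Applying the sheaf property for $\Spec R$ coefficient by coefficient — equivalently, the exactness of $0 \to R \to \prod_i R_{f_i} \to \prod_{i,j} R_{f_if_j}$ — the local polynomials glue to a unique monic $\chi(x;T)\in R[T]$ of degree $n$. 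For a general prime $\frakp$, choosing $f_i\notin\frakp$ realizes $R_\frakp$ as a further localization of $R_{f_i}$, which yields the required compatibility $\chi(x;T)_\frakp = \chi_i(x;T)_\frakp$ with the characteristic polynomial of $(\ell_x)_\frakp$ on the free $R_\frakp$-module $B_\frakp$. Uniqueness then follows because the map $R[T]\to\prod_\frakp R_\frakp[T]$ is injective (an element of $R$ annihilated in every local ring must have annihilator outside every prime, hence equal to $R$).

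For the Cayley--Hamilton assertion, the classical theorem applied on each free $R_\frakp$-module $B_\frakp$ gives $\chi(x;x)_\frakp = 0$ in $B_\frakp$ for every prime $\frakp$. Since $B$ is finitely generated over $R$, the canonical map $B\to\prod_\frakp B_\frakp$ is injective by the same annihilator argument, so $\chi(x;x)=0$ in $B$.

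The only mildly delicate point is the gluing step, but it is entirely standard; no substantive obstacle is anticipated, and the proof is essentially a packaging of local linear algebra with the sheaf condition on the structure sheaf of $\Spec R$.
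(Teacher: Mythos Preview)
Your proof is correct and follows essentially the same approach as the paper: construct the characteristic polynomial locally where $B$ is free, check agreement on overlaps by basis-independence, glue via the sheaf property, and deduce $\chi(x;x)=0$ from the local Cayley--Hamilton theorem. The paper localizes at primes and invokes ``standard patching arguments'' where you make the cover by distinguished opens $D(f_i)$ and the gluing explicit, but the underlying argument is identical.
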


\begin{proof}
Let $x\in B$.  Since $B$ is projective, for each prime $\frakp$ of $R$ we have that $B_\frakp$ is free over $R_\frakp$ of rank $n$.  By the determinant trick, we see that $x_\frakp \in R_\frakp$ satisfies the characteristic polynomial $\chi_\frakp(x;T) \in R_\frakp[T]$ of left multiplication by $x_\frakp$ on $B_\frakp$, where $\chi_\frakp(x;T)$ is monic of degree $n$.  Therefore by standard patching arguments \cite[Proposition II.2.2]{Hartshorne} (see also the proof of Proposition \ref{quad2}), there exists a unique monic polynomial $\chi(x;T) \in R[T]$ such that $\chi(x;T)_\frakp = \chi_\frakp(x;T)$.  Finally, since $\chi(x;x)_\frakp=0 \in R_\frakp$ for all primes $\frakp$, we have that $\chi(x;x)=0 \in R$.
\end{proof}

\begin{defn}
The \emph{degree} of $x \in B$, denoted $\deg_R(x)$ (or simply $\deg(x)$ if the base ring $R$ is clear from context), is the smallest positive integer $n \in \Z_{>0}$ such that $x$ satisfies a monic polynomial of degree $n$ with coefficients in $R$.
\end{defn}

By Lemma \ref{charpoly}, we have $\deg_R(x) \leq \rk B$ for all $x \in B$.  Note that $\deg_R(x)=1$ if and only if $x \in R$.

For $x \in B$, denote by $R[x]$ the (commutative) $R$-subalgebra of $B$ generated by $x$, i.e., $R[x]=\bigcup_{d=0}^{\infty} Rx^d \subset B$.  

\begin{lem} \label{projfree}
Let $x \in B$.  Then the following are equivalent:
\begin{enumroman}
\item $R[x]$ is free as an $R$-module;
\item $R[x]$ is projective as an $R$-module;
\item $x$ satisfies a unique monic polynomial of minimal degree $\deg_R(x)$ with coefficients in $R$; 
\item The ideal $\{f(T) \in R[T] : f(x)=0\} \subset R[t]$ is principal and generated by a monic polynomial.
\end{enumroman}
If any one of these holds, then $\deg_R(x) = \rk_R R[x]$.  
\end{lem}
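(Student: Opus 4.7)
The plan is to prove the equivalences via the cycle (i) $\Rightarrow$ (ii) $\Rightarrow$ (i) in conjunction with (i) $\Rightarrow$ (iv) $\Rightarrow$ (iii) $\Rightarrow$ (i), extracting the identity $\rk_R R[x] = \deg_R(x)$ from the key implication (ii) $\Rightarrow$ (i). The direction (i) $\Rightarrow$ (ii) is immediate since free modules are projective.

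The substantive step is (ii) $\Rightarrow$ (i). I set $r = \rk_R R[x]$, which is a well-defined constant since $R$ is connected. Localizing at a prime $\frakp$, the quotient $R_\frakp[x_\frakp]/\frakp R_\frakp[x_\frakp]$ is a quotient of the polynomial ring over the residue field $k = R_\frakp/\frakp R_\frakp$ and has $k$-dimension $r$, so the powers $1, \bar x, \dots, \bar x^{r-1}$ span it. Nakayama's lemma (as in Lemma \ref{dirsummand}) then lifts these to an $R_\frakp$-basis of $R_\frakp[x_\frakp]$. Consequently the $R$-submodule $M = R + Rx + \dots + Rx^{r-1}$ of $R[x]$ satisfies $M_\frakp = R[x]_\frakp$ at every prime, so $M = R[x]$; the natural surjection $R^r \to R[x]$ sending $e_i \mapsto x^{i-1}$ is then locally an isomorphism, hence globally one. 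To identify $r$ with $d = \deg_R(x)$, the relation $x^r = \sum_{i<r} a_i x^i$ produced by this basis gives $d \leq r$, while any monic polynomial of degree $d$ annihilating $x$ remains valid over $R_\frakp$, forcing $r = \deg_{R_\frakp}(x_\frakp) \leq d$.

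For the polynomial cycle, assume (i) and let $\mu(T) = T^d - \sum a_i T^i$ record the unique relation $x^d = \sum a_i x^i$ in the basis $1, x, \dots, x^{d-1}$. Polynomial division shows that any $f \in R[T]$ with $f(x) = 0$ has remainder of degree $< d$ that annihilates $x$, hence vanishes by linear independence; so the ideal of relations equals $(\mu)$, proving (iv). The implication (iv) $\Rightarrow$ (iii) is immediate, as the monic generator is uniquely determined (any two of the same degree would differ by a lower-degree element of the ideal, forced to vanish) and its degree matches $\deg_R(x)$. For (iii) $\Rightarrow$ (i), the natural surjection $R^d \to R[x]$ must be injective: any $(a_0, \dots, a_{d-1})$ in the kernel would yield a second monic polynomial $\mu + \sum a_i T^i$ of degree $d$ annihilating $x$, contradicting uniqueness.

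The main obstacle is (ii) $\Rightarrow$ (i): projectivity furnishes only local freeness of $R[x]$, and the task is to extract a global basis consisting of powers of $x$. This is where the connectedness of $R$ enters decisively, by producing a constant local rank that organizes the local bases coherently; once this is in hand, the polynomial-theoretic conditions (iii) and (iv) follow by routine division and uniqueness arguments.
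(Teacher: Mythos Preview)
Your proof is correct and follows essentially the same approach as the paper's. The only differences are organizational: you run the polynomial conditions as a cycle (i) $\Rightarrow$ (iv) $\Rightarrow$ (iii) $\Rightarrow$ (i) rather than the paper's pair of biconditionals (i) $\Leftrightarrow$ (iii) and (iii) $\Leftrightarrow$ (iv), and you make the identification $\rk_R R[x] = \deg_R(x)$ explicit (the paper leaves this implicit in its (ii) $\Rightarrow$ (i) argument).
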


\begin{proof}
The lemma is clear if $x \in R$, so we may assume $x \not \in R$ or equivalently $\deg_R(x)>1$.

The statement (i) $\Rightarrow$ (ii) is trivial.  To prove (ii) $\Rightarrow$ (i), suppose that $R[x]$ is projective.  Let $\frakp$ be a prime ideal of $R$ and let $k=R_\frakp/\frakp R_\frakp$ be the residue field of $R_\frakp$.  Then $R[x] \otimes_R k = k[x]$ has a $k$-basis $1,x,\dots,x^{d-1}$ for some $d \in \Z_{>1}$.  By Nakayama's lemma, $1,\dots,x^{d-1}$ is a $R_\frakp$-basis for $R_\frakp[x]$.  Since $R$ is connected, the value of $d=\rk R_\frakp[x]$ does not depend on the prime ideal $\frakp$.  It follows that the surjective map $\bigoplus_{i=0}^{d-1} Re_i \to R[x]$ by $e_i \mapsto x^i$ is an isomorphism since it is so locally, and hence $R[x]$ is free.

To prove that (iii) $\Leftrightarrow$ (i), we note that if $f(T) \in R[T]$ is the unique monic polynomial of degree $d=\deg_R(x) \geq 2$ with $f(x)=0$, then $1,x,\dots,x^{d-1}$ is an $R$-basis for $R[x]$---indeed, if $a_{d-1}x^{d-1}+\dots+a_0=0$ with $a_i \in R$ then $g(T)=f(T)+a_{d-1}T^{d-1}+\dots+a_0$ has $g(x)=0$ so $f(T)=g(T)$ and $a_0=\dots=a_{d-1}=0$, and the converse follows similarly.

The equivalence (iii) $\Leftrightarrow$ (iv) follows similarly.
\end{proof}

\begin{cor} \label{projfreedeg2}
Suppose that $\deg_R(x)=2$.  Then $R[x]$ is projective if and only if $ax \not\in R$ for all $a \neq 0 \in R$, and this holds if $1,x$ belongs to a basis for $B$.
\end{cor}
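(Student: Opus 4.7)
The plan is to reduce everything to Lemma \ref{projfree}, using condition (iii): $R[x]$ is projective if and only if $x$ satisfies a \emph{unique} monic polynomial of minimal degree, which here means degree $2$.

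For the first biconditional, suppose $x$ satisfies two monic polynomials $f_i(T) = T^2 - s_i T + n_i$ with $f_i(x) = 0$ for $i=1,2$. Then $f_1(x) - f_2(x) = 0$ gives $(s_2 - s_1) x = n_2 - n_1 \in R$. If $R[x]$ is projective, then by Lemma \ref{projfree} the polynomial is unique, so $s_2 - s_1 = 0$; this means any relation $ax = b$ with $a \in R$ forces $a = 0$ by choosing one polynomial and setting $s_2 = s_1 + a$, $n_2 = n_1 - b$ (which yields another monic degree-$2$ polynomial vanishing on $x$). Conversely, if $ax \notin R$ for all $a \neq 0 \in R$, then the displayed identity $(s_2 - s_1) x = n_2 - n_1$ forces $s_1 = s_2$ and then $n_1 = n_2$, so the monic degree-$2$ polynomial satisfied by $x$ is unique, and Lemma \ref{projfree} gives projectivity of $R[x]$.

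For the last clause, suppose $1, x$ can be extended to a basis of $B$ (or even a local basis of some $B_\frakp$). Then $1$ and $x$ are $R$-linearly independent, so any equation $ax - b = 0$ with $a, b \in R$ forces $a = b = 0$; in particular $a \neq 0$ cannot satisfy $ax \in R$. Applying the first part of the corollary then gives projectivity of $R[x]$.

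I expect no serious obstacle here: the proof is essentially a linear-algebra manipulation of the difference of two candidate monic polynomials of degree $2$. The only subtlety is being careful to separate the abstract criterion ``$x$ satisfies a unique monic polynomial of degree $\deg_R(x)$'' (given by Lemma \ref{projfree}) from the concrete criterion ``$ax \notin R$ for $a \neq 0$'', but since $\deg_R(x)=2$ the two conditions differ only by the rearrangement $ax = b$ recorded above.
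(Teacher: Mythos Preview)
Your argument is correct and is precisely the intended one: the paper states this as an immediate corollary of Lemma~\ref{projfree}(iii), and your translation between ``unique monic quadratic'' and ``$ax\notin R$ for $a\neq 0$'' via the difference $f_1-f_2$ is exactly that translation. One trivial slip: from $(s_2-s_1)x=n_2-n_1$ and $ax=b$ you should take $n_2=n_1+b$ (not $n_1-b$) to make $f_2(x)=0$, but this does not affect the argument.
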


\begin{exm}
Let $p$ be prime and let $B=R[\eps]/(\eps^2)$ with $R=\Z/p^2\Z$.  Then $R[\eps]=B$ is projective, but the element $x=p\eps$ satisfies $x^2=0$ as well as $px=0$, so $R[x]$ is not projective.
\end{exm}

If $R \to S$ is a ring homomorphism and $x \in B$, then we abbreviate $\deg_S(x)$ for $\deg_S(x \otimes 1)$ with $x \otimes 1 \in B \otimes_R S=B_S$.  


\begin{lem} \label{lowersemi}
For any $x \in B$, the map 
\begin{align*}
\Spec R &\to \Z \\ 
\frakp &\mapsto \deg_{R_\frakp}(x)
\end{align*} 
is lower semicontinuous, i.e., for all primes $\frakq \supset \frakp$ we have $\deg_{R_\frakq}(x) \geq \deg_{R_\frakp}(x)$.
\end{lem}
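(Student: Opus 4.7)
The plan is to exhibit the canonical localization map $R_\frakq \to R_\frakp$ whenever $\frakp \subset \frakq$, and to observe that pushing a monic polynomial forward along this ring map preserves both monicity and degree. Then any monic polynomial of degree $n$ that kills $x$ in $B_\frakq$ produces a monic polynomial of degree $n$ that kills $x$ in $B_\frakp$, and the inequality of minima follows at once.

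First I would nail down the direction of the map. Since $\frakp \subset \frakq$ we have $R \setminus \frakq \subseteq R \setminus \frakp$, so inverting the smaller set factors through inverting the larger one, giving a canonical $R$-algebra homomorphism $\phi : R_\frakq \to R_\frakp$ (equivalently, $R_\frakp$ is a further localization of $R_\frakq$). Coefficientwise, $\phi$ induces a ring homomorphism $R_\frakq[T] \to R_\frakp[T]$; since $\phi(1)=1$, it carries a monic polynomial of degree $n$ to a monic polynomial of the same degree $n$ (the leading coefficient cannot degenerate).

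Next, set $n = \deg_{R_\frakq}(x)$ and choose a monic $f(T) \in R_\frakq[T]$ of degree $n$ with $f(x_\frakq) = 0$ in $B_\frakq$. Because $B_\frakp = B_\frakq \otimes_{R_\frakq} R_\frakp$ and $x_\frakp$ is the image of $x_\frakq$ under this base change, applying $\phi$ to the coefficients of $f$ yields a monic polynomial $\phi(f)(T) \in R_\frakp[T]$ of degree $n$ satisfying $\phi(f)(x_\frakp) = 0$. By the minimality in the definition of $\deg_{R_\frakp}(x)$, we conclude $\deg_{R_\frakp}(x) \leq n = \deg_{R_\frakq}(x)$.

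There is essentially no obstacle here beyond getting the direction of the map correct and confirming that monicity survives the base change; the statement is really just the general principle that a ``relation of degree at most $n$'' is preserved under arbitrary ring homomorphisms, and here it is applied to the particular homomorphism $R_\frakq \to R_\frakp$.
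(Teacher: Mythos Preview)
Your argument is correct: the localization map $R_\frakq \to R_\frakp$ exists precisely when $\frakp \subset \frakq$, monic polynomials of a given degree push forward to monic polynomials of the same degree, and the relation $f(x_\frakq)=0$ base-changes to $\phi(f)(x_\frakp)=0$. This immediately yields $\deg_{R_\frakp}(x) \leq \deg_{R_\frakq}(x)$, which is exactly the specialization inequality stated in the lemma.

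The paper takes a different, more structural route. It introduces for each $m$ the ideal $\fraka_m \subset R$ of leading coefficients of degree-$\leq m$ relations satisfied by $x$, and shows that $\deg_{R_\frakp}(x) \leq m$ if and only if $\fraka_m \not\subset \frakp$; equivalently, the set $\{\frakp : \deg_{R_\frakp}(x) > m\}$ equals the Zariski-closed set $V(\fraka_m)$. Your approach is shorter and entirely sufficient for the lemma as stated (where ``lower semicontinuous'' is defined via specialization). The paper's approach buys strictly more: it shows the level sets are genuinely Zariski-closed, not merely stable under specialization, and this is what is invoked in the next corollary asserting that $\{\frakp : \deg_{R_\frakp}(x)=n\}$ is closed. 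Your argument alone would only give that this set is specialization-closed, which in general is weaker.
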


\begin{proof}
Let $n=\deg_R(x)$, and for each integer $0 \leq m \leq n$, let $\fraka_m$ be the ideal of $R$ consisting of all leading coefficients of polynomials $f(T) \in R[T]$ such that $f(x)=0$ with $\deg(f) \leq i$.  Clearly we have $\fraka_0=(0) \subset \fraka_1 \subset \dots \subset \fraka_n=R$.  It follows that $\deg_{R_\frakp}(x_\frakp)=n$ if and only if $\frakp \supset \fraka_{n-1}$, and more generally that $\deg_{R_\frakp}(x_\frakp)=m$ if and only if $\fraka_{m+1} \supsetneq \frakp \supset \fraka_m$, and consequently the map is lower semicontinuous.  
\end{proof}

\begin{cor}
For any $x \in B$ with $\deg_R(x)=n$, the set of primes $\frakp \in \Spec R$ where $\deg_{R_\frakp}(x)=n$ is closed and nonempty.  Moreover, we have $\deg_R(x) \geq \deg_{R_\frakp}(x)$ for all primes $\frakp$.
\end{cor}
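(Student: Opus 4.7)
The plan is to read off the corollary directly from the filtration of ideals $\fraka_0 \subset \fraka_1 \subset \dots \subset \fraka_n = R$ constructed in the proof of Lemma \ref{lowersemi}, where $\fraka_m$ is the ideal of leading coefficients of polynomials of degree at most $m$ vanishing at $x$.

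First I would handle the easy inequality $\deg_R(x) \geq \deg_{R_\frakp}(x)$: a monic polynomial $f(T) \in R[T]$ of degree $\deg_R(x)$ satisfied by $x$ remains monic and vanishes on $x_\frakp$ after base change to $R_\frakp$, so $\deg_{R_\frakp}(x) \leq \deg_R(x)$. In particular, $\deg_{R_\frakp}(x) \leq n$ is automatic.

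Next I would identify the locus $\{\frakp : \deg_{R_\frakp}(x) = n\}$ with $V(\fraka_{n-1})$. Indeed, $\deg_{R_\frakp}(x) \leq n-1$ if and only if there is a monic polynomial of degree at most $n-1$ over $R_\frakp$ vanishing at $x_\frakp$, which happens if and only if some leading coefficient in $\fraka_{n-1}$ becomes a unit in $R_\frakp$, i.e., $\fraka_{n-1} \not\subset \frakp$. Combined with the automatic upper bound, we obtain $\deg_{R_\frakp}(x) = n$ if and only if $\frakp \supset \fraka_{n-1}$, and this is exactly the closed set $V(\fraka_{n-1}) \subset \Spec R$.

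Finally, for nonemptiness, I would observe that the hypothesis $\deg_R(x) = n$ means there is no monic polynomial of degree at most $n-1$ in $R[T]$ vanishing at $x$, so $1 \notin \fraka_{n-1}$ and $\fraka_{n-1}$ is a proper ideal; hence it is contained in some maximal ideal, which lies in $V(\fraka_{n-1})$. There is no real obstacle here: the whole corollary is a bookkeeping consequence of the preceding lemma, and the only thing to watch is the correct correspondence between the indexing of the chain $\fraka_m$ and the value of the degree, in order to avoid an off-by-one slip.
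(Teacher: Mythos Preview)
Your proposal is correct and follows exactly the approach implied by the paper: the corollary is stated without proof there, being an immediate consequence of the ideal filtration $\fraka_0 \subset \dots \subset \fraka_n = R$ from Lemma~\ref{lowersemi}, and you have simply spelled out the details (the identification of the degree-$n$ locus with $V(\fraka_{n-1})$ and the properness of $\fraka_{n-1}$ from $\deg_R(x)=n$). The only minor point to make explicit, if you want to be scrupulous, is that in passing from a monic polynomial over $R_\frakp$ vanishing at $x_\frakp$ back to a polynomial over $R$ with leading coefficient in $\fraka_{n-1} \setminus \frakp$, one must clear both the denominators in the coefficients and the denominator witnessing that the value is zero in $B_\frakp$; but this is routine.
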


\begin{rmk}
Note that if $R[x]$ is projective, Lemma \ref{lowersemi} is immediate since then in fact $\deg_{R_\frakp}(x_\frakp)=\rk(R[x]_\frakp)$ is constant.
\end{rmk}

\begin{defn}
The \emph{degree} of $B$, denoted $\deg_R(B)$ (or simply $\deg(B)$, when no confusion can result), is the smallest positive integer $n \in \Z_{>0}$ such that every element of $B$ has degree at most $n$.
\end{defn}

\begin{exm} \label{deg2exm}
$B$ has degree $1$ as an $R$-algebra if and only if $B=R$.  

If $B$ is free of rank $n$, then $B$ has degree at most $n$ but not necessarily degree $n$, even if $B$ is commutative: for example, the algebra $R[x,y,z]/(x,y,z)^2$ has rank $4$ but has degree $2$ and $R[x,y]/(x^3,xy,y^2)$ has rank $4$ but degree $3$.
\end{exm}

\begin{exm} \label{examprod}
If $K$ is a separable field extension of $F$ with $\dim_F K=n$, then $K$ has degree $n$ as a $F$-algebra (in the above sense) by the primitive element theorem. 

More generally, if $F$ is a field and $B$ is a commutative \'etale algebra with $\#F \geq \dim_F(B)=n$, then $\deg_F(B)=n$.  Indeed, we can write $B \cong \prod_i K_i$ as a product of separable field extensions $K_i/F$, and so if $a_i \in K_i$ are primitive elements with different characteristic polynomials (equivalently, minimal polynomials), which is possible under the hypothesis that $\#K_i \geq \#F \geq n$, then the element $(a_i)_i \in \prod_i K_i \cong B$ has minimal polynomial of degree $n$.  
\end{exm}

\begin{exm}
If $B$ is a central simple algebra over a field $F$, then $\deg(B)^2=\dim_F(B)$.  More generally, if $B$ is a semisimple algebra over $F$, then the degree of $B$ agrees with the usual definition \cite{Lam} given in terms of the Wedderburn-Artin theorem. 
\end{exm}

\begin{defn}
$B$ has \emph{constant degree} $n \in \Z_{>0}$ if $\deg_{R_\frakp}(B_\frakp)=n$ for all prime ideals $\frakp$ of $R$.  
\end{defn}

\begin{exm}
If $R$ is a domain then any $R$-algebra $B$ has constant degree.  Indeed, for any prime $\frakp$ of $R$ we have $\deg_R(B) \geq \deg_{F}(B)$ where $F$ denotes the quotient field of $R$, but on the other hand if $\deg_F(x/d)=n=\deg_F(B)$ for $x \in B$ and $d \in R$, then we must have $\deg_R(x)=n$.
\end{exm}

\begin{lem} \label{constantiscomm}
If $B$ has constant degree $n=\rk_R(B)$, then $B$ is commutative.
\end{lem}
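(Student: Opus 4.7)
The approach is to reduce to the case of a field by passing through generic fibers, where the condition $\deg = \rk = n$ will force the algebra to be generated as an $R$-algebra by a single element, hence commutative.

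I would first reduce to the case of $R$ being local, since the vanishing of commutators $[x,y]$ can be checked after localizing at each prime of $R$, and the constant degree and rank hypotheses transfer to $B_\frakp$ over $R_\frakp$ (primes of $R_\frakp$ correspond to primes of $R$ contained in $\frakp$).

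The crucial case is when $R$ is a domain with fraction field $K$. Since $B$ is free of rank $n$, it embeds in $B \otimes_R K$, which has $K$-dimension $n$. By the Example on domains preceding the lemma, $\deg_K(B \otimes_R K) = \deg_R(B) = n$. Over the field $K$, choose any $\alpha \in B \otimes_R K$ with $\deg_K(\alpha) = n$: the powers $1, \alpha, \dots, \alpha^{n-1}$ must be $K$-linearly independent, for otherwise dividing any linear relation by its (necessarily invertible) leading coefficient would yield a monic polynomial of degree less than $n$ satisfied by $\alpha$. These powers therefore span a $K$-subspace of dimension $n = \dim_K(B \otimes_R K)$, so $B \otimes_R K = K[\alpha]$ is commutative; consequently $B \subseteq B \otimes_R K$ is commutative.

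For general (local, connected) $R$, I would reduce to the domain case via the minimal primes $\frakq_1, \dots, \frakq_r$. For reduced $R$, each $R_{\frakq_i}$ is a field (being $0$-dimensional local reduced), and freeness of $B$ combined with $\bigcap_i \frakq_i = 0$ gives an injection $B \hookrightarrow \prod_i B \otimes_R R_{\frakq_i}$; each factor is commutative by the field case applied to $R_{\frakq_i}$, so $B$ is commutative. For non-reduced $R$, I would pass to the quotient by the nilradical $\sqrt{0}$ and then lift: the main obstacle will be to show that the constant degree hypothesis descends (so the reduced case applies to $B/\sqrt{0}\cdot B$) and that the resulting commutativity modulo this nil ideal lifts to $B$ itself. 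This should follow by an induction on the nilpotency index of $\sqrt{0}$, using that commutators modulo $\sqrt{0}\cdot B$ vanish and exploiting the rigidity imposed by $\deg = \rk$.
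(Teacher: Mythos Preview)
Your reduction to the local case and your treatment of domains (via the fraction field) and of reduced rings (via injection into the product over minimal primes) are correct. The genuine gap is the non-reduced case. You propose to pass to $R/\sqrt{0}$ and then lift commutativity back through the nilradical by ``induction on the nilpotency index,'' but commutativity modulo a nil ideal does not lift in general: the ring of upper-triangular $2\times 2$ matrices over a field is commutative modulo its strictly upper-triangular ideal but is not itself commutative. So some essential use of the hypothesis $\deg=\rk$ is required precisely at this lifting step, and you have not supplied one; the phrase ``exploiting the rigidity imposed by $\deg=\rk$'' is not an argument. The same difficulty obstructs your proposed descent of constant degree to $R/\sqrt{0}$: passing to a quotient can only decrease degree, and nothing in your outline rules out a strict drop.

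The paper avoids this dichotomy altogether by working at the \emph{closed} fiber rather than at the generic points. After localizing so that $R$ is local with maximal ideal $\frakm$ and residue field $k$, it takes $x\in B$ with $\deg_R(x)=n$ and argues via Nakayama that $\deg_k(x)=n$; then $1,x,\dots,x^{n-1}$ is a $k$-basis for $B/\frakm B$, and hence (Nakayama again) an $R$-basis for $B$, giving $B=R[x]$ commutative. This single passage to the residue field handles reduced and non-reduced $R$ uniformly and replaces the entire second half of your outline.
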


\begin{proof}
We know that $B$ is commutative if and only if $B_\frakm$ is commutative for all maximal ideals $\frakm$ of $B$, since then the commutator $[B,B]$ is locally trivial and hence trivial.  So we may suppose that $R$ is a local ring with maximal ideal $\frakm$.  By hypothesis, we have $\deg_R(B)=n=\rk_R(B)$, so there exists an element $x \in B$ with $\deg_R(x)=n$.  By Nakayama's lemma, we find that $\deg_k(x)=n$, where $k=R/\frakm$ is the residue field of $R$; so the powers of $x$ form a basis for $B_k$, hence also of $B$, and it follows that $B$ is commutative, as claimed.
\end{proof}

\begin{exm}
Lemma \ref{constantiscomm} is false if merely $B$ has degree $n=\rk_R(B)$ (but not constant degree), as in Example \ref{deg32noncomm}.
\end{exm}

Unfortunately, $\deg_R(B)$ is not invariant under base extension, as the following example illustrates.

\begin{exm} \label{notbehavewell}
Let $p$ be prime, let $R=\F_p$, and let $B=\prod_{i=1}^{n} \F_p$ with $n \geq p$.  Then every element $x \in B$ satisfies $x^p=x$, so $\deg_R(B) \leq p$.  On the other hand, the element $x=(0,1,2,\dots,p-1,0,\dots,0)$ has degree $p$ since the elements $1,x,\dots,x^{p-2}$ are linearly independent over $\F_p$ (consider the corresponding Vandermonde matrix), hence $\deg_R(B)=p$.  On the other hand, $\deg_{\overline{\F}_p}(B \otimes_{\F_p} \overline{\F}_p)=n$ by Example \ref{examprod}.
\end{exm}

\begin{defn} \label{gdegdefn} 
The \emph{geometric degree} of $B$, denoted $\gdeg_R(B)$ (or simply $\gdeg(B)$), is the maximum of $\deg_S(B \otimes_R S)$ for all maps $R \to S$ with $S$ a (connected, Noetherian, commutative) ring.  
\end{defn}

\begin{rmk}
In Definition \ref{gdegdefn}, we may equivalently restrict the maximum to rings $S$ which are algebraically closed fields: indeed, if $\gdeg(B)=\deg_S(B \otimes_R S)$ with $\deg_S(x \otimes s)=\deg_S(B \otimes_R S)=n$ then by Lemma \ref{lowersemi} there exists a maximal ideal $\frakm \subset S$ such that $\deg_{S_\frakm}(x \otimes s)=\deg_{k}(x \otimes s)=n$ where $k=S_\frakm/\frakm S_\frakm$, and then $\deg_{\overline{k}}(x \otimes s)=n$ as well, where $\overline{k}$ is the algebraic closure of $k$.
\end{rmk}

For $m \in \Z_{>0}$, we denote by $R[a_1,\dots,a_m]=R[a]$ the polynomial ring in $n$ variables over $R$.

\begin{lem} \label{deguniv}
Suppose that $B$ is generated by $x_1,\dots,x_m$ as an $R$-module, and define 
\[ \xi=a_1x_1+\dots+a_mx_m = \sum_{i=1}^m a_ix_i \in B \otimes_R R[a]. \]
Then $\gdeg_R(B)=\deg_{R[a]}(\xi)<\infty$.
\end{lem}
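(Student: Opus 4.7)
The plan is to prove two inequalities between $\gdeg_R(B)$ and $\deg_{R[a]}(\xi)$, and then deduce finiteness from the fact that degree is bounded by rank.

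For the inequality $\deg_{R[a]}(\xi) \leq \gdeg_R(B)$, I would observe that $R[a] = R[a_1,\dots,a_m]$ is Noetherian by the Hilbert basis theorem and connected since $R$ is (idempotents in $R[a]$ lie in $R$). Hence $R \to R[a]$ is one of the homomorphisms allowed in Definition \ref{gdegdefn}, so $\deg_{R[a]}(B \otimes_R R[a]) \leq \gdeg_R(B)$, and in particular $\deg_{R[a]}(\xi) \leq \gdeg_R(B)$.

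For the reverse inequality, I would use the remark after the definition to realize $\gdeg_R(B)$ by some homomorphism $R \to S$ with $S$ a (connected Noetherian) ring and some $y \in B \otimes_R S$ with $\deg_S(y) = \gdeg_R(B)$. Since $B$ is generated as an $R$-module by $x_1,\dots,x_m$, we may write $y = \sum_{i=1}^m s_i (x_i \otimes 1)$ for some $s_i \in S$. Then there is a unique $R$-algebra homomorphism $\varphi \colon R[a] \to S$ with $\varphi(a_i) = s_i$, and under the induced map $B \otimes_R R[a] \to B \otimes_R S$ the element $\xi$ maps to $y$. If $f(T) \in R[a][T]$ is a monic polynomial of degree $d = \deg_{R[a]}(\xi)$ annihilating $\xi$, then applying $\varphi$ coefficient-by-coefficient produces a monic polynomial $f^\varphi(T) \in S[T]$ of degree $d$ with $f^\varphi(y) = 0$. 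Thus $\gdeg_R(B) = \deg_S(y) \leq d = \deg_{R[a]}(\xi)$, which combined with the previous step gives equality.

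For finiteness, I would argue that for any homomorphism $R \to S$ the rank is preserved, so $B \otimes_R S$ is a finitely generated projective $S$-module of rank $n = \rk_R(B)$, and Lemma \ref{charpoly} applied to $B \otimes_R S$ over $S$ shows every element has degree at most $n$. Therefore $\deg_S(B \otimes_R S) \leq n$, and taking the supremum gives $\gdeg_R(B) \leq \rk_R(B) < \infty$.

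The only slightly delicate point is verifying that $R[a_1,\dots,a_m]$ satisfies the hypotheses in Definition \ref{gdegdefn} (connected, Noetherian, commutative), and ensuring the specialization argument correctly passes from a monic relation over $R[a]$ to one over $S$; both are straightforward but are the steps one must check, and there is no genuine obstacle beyond them.
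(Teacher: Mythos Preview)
Your proof is correct and follows essentially the same approach as the paper's: both establish the two inequalities $\deg_{R[a]}(\xi) \leq \gdeg_R(B)$ (since $R[a]$ is one of the admissible extensions) and $\gdeg_R(B) \leq \deg_{R[a]}(\xi)$ (by specialization $a_i \mapsto s_i$). Your version is simply more explicit---you spell out why $R[a]$ is connected and Noetherian, and you write down the specialization homomorphism $\varphi$ explicitly---whereas the paper compresses the specialization step into one sentence and applies it uniformly to all $S$ rather than only to a maximizing one; the content is the same.
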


\begin{proof}
Let $S$ be an $R$-algebra.  Then since $x_1,\dots,x_m$ generate $B \otimes_R S$ as an $S$-algebra, by specialization we see that $\deg_S(B \otimes_R S) \leq \deg_{R[a]}(\xi)$, so $\gdeg(B) \leq \deg_{R[a]}(\xi)$.  But
\[ \deg_{R[a]}(\xi) \leq \deg_{R[a]}(B_{R[a]}) \leq \gdeg(B) \]
by definition, so equality holds.
\end{proof}

We conclude with two results which characterize the geometric degree.

\begin{lem} \label{degflat}
If $S$ is a flat $R$-algebra, then $\gdeg_R(B)=\gdeg_S(B \otimes_R S)$.
\end{lem}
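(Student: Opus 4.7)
The plan is to use Lemma \ref{deguniv} to reduce the claimed equality of geometric degrees to an equality of ordinary degrees of the universal generic element under flat base change.  Choose $x_1,\dots,x_m$ generating $B$ as an $R$-module; they also generate $B\otimes_R S$ as an $S$-module.  Writing $\xi_R=\sum_i a_i x_i\in B\otimes_R R[a]$ and $\xi_S=\xi_R\otimes 1\in B\otimes_R S[a]$, Lemma \ref{deguniv} gives $\gdeg_R(B)=\deg_{R[a]}(\xi_R)$ and $\gdeg_S(B\otimes_R S)=\deg_{S[a]}(\xi_S)$.  Since $S$ is flat over $R$, the map $R[a]\to S[a]=R[a]\otimes_R S$ is flat, so the problem reduces to showing that the degree of this generic element is preserved under flat base change.

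One direction is immediate: any monic polynomial in $R[a][T]$ annihilating $\xi_R$ base-changes to one in $S[a][T]$ annihilating $\xi_S$, so $\deg_{S[a]}(\xi_S)\le\deg_{R[a]}(\xi_R)$.  For the reverse, I would translate the problem to an ideal-theoretic statement in the style of the proof of Lemma \ref{lowersemi}: for each $m\ge 0$ let $\fraka_m^R\subseteq R[a]$ be the ideal of leading coefficients of polynomials of degree $\le m$ in $R[a][T]$ annihilating $\xi_R$, and analogously $\fraka_m^S\subseteq S[a]$, so that $\deg_{R[a]}(\xi_R)=\min\{m:\fraka_m^R=R[a]\}$ and likewise for $S$.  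Writing $I_m^R$ for the kernel of the evaluation map $R[a][T]_{\le m}\to B\otimes_R R[a]$ and tensoring the defining exact sequence with the flat extension $S[a]$, one obtains $I_m^S=I_m^R\otimes_{R[a]}S[a]$ and hence $\fraka_m^S=\fraka_m^R\cdot S[a]$.  It therefore suffices to show that $\fraka_m^R\cdot S[a]=S[a]$ forces $\fraka_m^R=R[a]$.

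I expect to handle this last step by working locally and invoking Lemma \ref{lowersemi}: for any prime $\frakq\subseteq S[a]$ lying over $\frakp\subseteq R[a]$, the induced map $R[a]_\frakp\to S[a]_\frakq$ is a flat local homomorphism of Noetherian local rings, hence faithfully flat, and the same ideal-theoretic comparison then forces $\deg_{R[a]_\frakp}(\xi_R)=\deg_{S[a]_\frakq}(\xi_S)$.  Combined with Lemma \ref{lowersemi} and its corollary, which identify $\deg_{R[a]}(\xi_R)$ with the maximum of the lower semicontinuous function $\frakp\mapsto\deg_{R[a]_\frakp}(\xi_R)$ and locate this maximum on a nonempty closed subset of $\Spec R[a]$, one should be able to pass from the local equality of degrees to the global one.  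The hard part will be verifying that this maximum is attained at a prime lying in the image of $\Spec S[a]\to\Spec R[a]$: this image is stable under generization by going-down for the flat morphism $R[a]\to S[a]$, and combining this fact with the closed stratum of maximal-degree primes from Lemma \ref{lowersemi} should complete the argument.
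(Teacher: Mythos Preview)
Your reduction to the universal element $\xi$ via Lemma~\ref{deguniv} matches the paper exactly, but from that point the paper takes a much shorter and essentially different route. The paper simply observes that $R[a][\xi]$ is projective (so that $\deg_{R[a]}(\xi)=\rk_{R[a]}R[a][\xi]$ by Lemma~\ref{projfree}), and then invokes that rank is preserved under the flat base change $R[a]\to S[a]$. No leading-coefficient ideals, no local-to-global argument, no going-down.

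Your longer route has a genuine gap at precisely the point you flag as the ``hard part.'' You correctly deduce $\fraka_m^S=\fraka_m^R\cdot S[a]$ from flatness, but to conclude $\deg_{S[a]}(\xi_S)\ge\deg_{R[a]}(\xi_R)$ you need $\fraka_{n-1}^R\subsetneq R[a]\Rightarrow\fraka_{n-1}^R\cdot S[a]\subsetneq S[a]$, which is a \emph{faithful} flatness statement. Your proposed workaround via localization does not close this: going-down tells you the image of $\Spec S[a]\to\Spec R[a]$ is stable under generization, while Lemma~\ref{lowersemi} says the maximal-degree locus is closed (stable under specialization). A generization-stable set and a closed set need not meet---think of $R=k[t]$, $S=R_t$, where the image misses $V(t)$ entirely while a closed stratum could sit inside $V(t)$. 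So the combination you propose does not force the intersection you need.

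What actually rescues the situation is exactly the fact the paper uses: because $\xi$ is the generic element, $R[a][\xi]$ is free, hence the function $\frakp\mapsto\deg_{R[a]_\frakp}(\xi)$ is \emph{constant} on $\Spec R[a]$, and the whole localization issue evaporates. That single observation is the missing idea in your approach.
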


\begin{proof}
For $\xi$ as in Lemma \ref{deguniv}, we have $\gdeg_R(B)=\deg_{R[a]}(\xi)=\rk_{R[a]} R[a][\xi]$; since $S$ is flat over $R$ we have that $S[a]$ is flat over $R[a]$ and $\rk_{R[a]} R[a][\xi] = \rk_{S[a]} S[a][\xi] = \deg_{S[a]}(\xi)=\deg_S(B \otimes_R S)$, as claimed.
\end{proof}

\begin{lem} \label{lowersemimax}
We have $\gdeg_R(B) = \displaystyle{\max_{\frakp \in \Spec R}} \gdeg_{R_\frakp}(B_\frakp)$.  
\end{lem}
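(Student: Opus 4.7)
The plan is to derive this as an immediate consequence of the preceding flat-base-change result, Lemma \ref{degflat}. The key observation is that for every prime $\frakp \in \Spec R$, the localization map $R \to R_\frakp$ is flat; so Lemma \ref{degflat} applies with $S = R_\frakp$ and yields $\gdeg_R(B) = \gdeg_{R_\frakp}(B_\frakp)$. Since this equality holds for every $\frakp$, taking the maximum over primes yields the stated identity with both sides equal to $\gdeg_R(B)$.

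To make the structure transparent I would first record the elementary inequality $\gdeg_R(B) \geq \gdeg_{R_\frakp}(B_\frakp)$ directly from the definition: any ring homomorphism $R_\frakp \to S$ composes with $R \to R_\frakp$ to give a ring homomorphism $R \to S$, and $B_\frakp \otimes_{R_\frakp} S \cong B \otimes_R S$, so the family of test rings $S$ defining $\gdeg_{R_\frakp}(B_\frakp)$ is a subfamily of those defining $\gdeg_R(B)$. Taking the maximum over $\frakp$ gives $\max_{\frakp} \gdeg_{R_\frakp}(B_\frakp) \leq \gdeg_R(B)$, a bound that does not require flatness. The reverse inequality is then supplied by Lemma \ref{degflat} applied to the flat localization $R \to R_\frakp$ for any single prime $\frakp$, which in fact produces equality prime-by-prime.

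I do not anticipate any real obstacle: the substantive content, namely tracking the universal element $\xi = \sum_i a_i x_i$ of Lemma \ref{deguniv} and using projectivity of $R[a][\xi]$ so that its rank is preserved under flat base change, is already carried out in Lemma \ref{degflat}. The present lemma simply repackages that statement to emphasize that geometric degree is a local invariant on $\Spec R$, and the maximum is automatically attained because $\gdeg_R(B) \leq \rk_R(B) < \infty$ and every individual term of the maximum equals $\gdeg_R(B)$.
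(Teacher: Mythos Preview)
Your argument is correct, and in fact proves the stronger assertion that $\gdeg_{R_\frakp}(B_\frakp)=\gdeg_R(B)$ for \emph{every} prime $\frakp$, not merely that the maximum is attained. The paper takes a different route: after noting the easy inequality $\gdeg_R(B)\geq\gdeg_{R_\frakp}(B_\frakp)$, it chooses a ring $S$ and an element $x\in B\otimes_R S$ realizing $\gdeg_R(B)=n$, invokes the lower semicontinuity of Lemma~\ref{lowersemi} to find a prime $\frakq\subset S$ with $\deg_{S_\frakq}(x)=n$, and then observes that the prime $\frakp\subset R$ lying under $\frakq$ satisfies $\gdeg_{R_\frakp}(B_\frakp)\geq n$. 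Thus the paper uses Lemma~\ref{lowersemi} rather than Lemma~\ref{degflat}. Your route is shorter and yields the sharper conclusion that geometric degree is literally constant on $\Spec R$; the paper's route has the minor advantage of avoiding the implicit appeal (inside the proof of Lemma~\ref{degflat}) to projectivity of $R[a][\xi]$, relying instead only on the elementary semicontinuity statement.
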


\begin{proof}
We have by definition $\gdeg_R(B) \geq \gdeg_{R_\frakp}(B_\frakp)$ for all primes $\frakp$.  Conversely, let $S$ be a ring such that $\gdeg_R(B) = \deg_S(B \otimes S)=n$, and let $x \in B \otimes S$ have $\deg_S(x)=n$.  Then by Lemma \ref{lowersemi}, there exists a prime $\frakq \subset S$ such that $\deg_{S_\frakq}(x)=n$.  If $\frakq$ lies over $\frakp \in \Spec R$, then it follows that $\gdeg_{R_\frakp}(B_\frakp)=n = \gdeg_R(B)$.  The result follows.
\end{proof}

\section{Involutions}

In this section, we discuss the notion of a standard involution on an $R$-algebra, and we compare this to the notion of degree and geometric degree from the previous section.  

\begin{defn}
An \emph{involution (of the first kind)} $\overline{\phantom{x}}:B \to B$ is an $R$-linear map which satisfies:
\begin{enumroman}
\item $\overline{1}=1$, 
\item $\overline{\phantom{x}}$ is an anti-automorphism, i.e., $\overline{xy}=\overline{y}\,\overline{x}$ for all $x,y \in B$, and
\item $\overline{\overline{x}}=x$ for all $x \in B$.  
\end{enumroman}
\end{defn}

If $B^{\text{op}}$ denotes the opposite algebra of $B$, then one can equivalently define an involution to be an $R$-algebra isomorphism $B \to B^{\text{op}}$ such that the underlying $R$-linear map has order at most $2$.

\begin{defn}
An involution $\overline{\phantom{x}}$ is \emph{standard} if $x\overline{x} \in R$ for all $x \in B$.  
\end{defn}

\begin{exm}
The usual adjoint map $M_k(R) \to M_k(R)$ defined by $A \mapsto A^{\dagger}$ (with $AA^{\dagger}=A^{\dagger}A=\det(A)I$) is $R$-linear if and only if $k=2$, since it restricts to the map $r \mapsto r^{k-1}$ on $R$; if $k=2$, then it is in fact a standard involution.  In particular, we warn the reader that many authors consider involutions which are not $R$-linear---although this more general class is certainly of interest (see e.g.\ Knus and Merkurjev \cite{KnusMerkurjev}), we will not consider them here.
\end{exm}

\begin{exm}
To verify that an involution $\overline{\phantom{x}}:B \to B$ is standard, it is not enough to check that $x\overline{x} \in R$ for $x$ in a set of generators for $B$ as an $R$-module.  The Clifford algebra of a quadratic form in many variables gives a wealth of such examples, among others.
\end{exm}

\begin{rmk}
Note that if $\overline{\phantom{x}}$ is a standard involution, so that $x\overline{x} \in R$ for all $x \in B$, then 
\[ (x+1)(\overline{x+1})=(x+1)(\overline{x}+1)=x\overline{x}+x+\overline{x}+1 \in R \] 
and hence $x+\overline{x} \in R$ for all $x \in B$ as well.
\end{rmk}

\begin{exm} \label{trivstddinv}
A standard involution is \emph{trivial} if it is the identity map.  The $R$-algebra $B=R$ has a trivial standard involution as does the commutative algebra $B=R[\eps]/(\eps^2)$ for $R$ any commutative ring of characteristic $2$.

$B$ has a trivial standard involution if and only if $B$ is commutative and $x^2 \in R$ for all $x \in B$.  If the identity map is a standard involution on $B$, then either $B=R$ or $2$ is a zerodivisor in $R$.  Indeed, for any $x \in B$ we have $(x+1)^2 \in R$, so $2x \in R$ for all $x \in B$; if $2$ is a nonzerodivisor in $R$, then $x/1 \in R[1/2]$ so $\rk B[1/2]=\rk B=1$ so $B=R$.  
\end{exm}

Let $\overline{\phantom{x}}:B \to B$ be a standard involution on $B$.  Then we define the \emph{reduced trace} $\trd:B \to R$ by $\trd(x)=x + \overline{x}$ and the \emph{reduced norm} $\nrd:B \to R$ by $\nrd(x)=\overline{x}x$ for $x \in B$.  Since
\begin{equation} \label{stdinvoleq}
x^2-(x+\overline{x})x+\overline{x}x=0
\end{equation}
identically we have $x^2-\trd(x)x+\nrd(x)=0$ for all $x \in B$.  Therefore any $R$-algebra $B$ with a standard involution has $\deg_R(B) \leq 2$.  In particular, for $x,y \in B$ we have
\[ (x+y)^2-\trd(x+y)(x+y)+\nrd(x+y)=0 \]
so
\begin{equation} \label{xyyx}
xy+yx=\trd(y)x+\trd(x)y+\nrd(x+y)-\nrd(x)-\nrd(y).
\end{equation}

An $R$-algebra $S$ is \emph{quadratic} if $S$ has rank $2$.  The following lemma is well-known \cite[I.1.3.6]{Knus}; we give a proof for completeness.

\begin{lem} \label{quad2}
Let $S$ be a quadratic $R$-algebra.  Then $S$ is commutative, we have $\deg_R(S)=\gdeg_R(S)=2$, and there is a unique standard involution on $S$.
\end{lem}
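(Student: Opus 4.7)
The plan is to exploit the local structure of $S$ together with the canonical characteristic polynomial supplied by Lemma~\ref{charpoly}.

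First I would use Lemma~\ref{dirsummand} to write $S = R \oplus M$ with $M$ a rank-$1$ projective $R$-module. Localizing at a prime $\frakp$, the algebra $S_\frakp = R_\frakp \oplus R_\frakp x$ is free with basis $\{1,x\}$, and $x^2 = a + bx$ for some $a, b \in R_\frakp$; in particular $S_\frakp = R_\frakp[x]$ is commutative with $\deg_{R_\frakp}(S_\frakp) = 2$. Commutativity globalizes since the commutator $[S,S]$ vanishes locally, and Lemma~\ref{charpoly} gives $\deg_R(S) \le 2$, while any nonzero $x \in M$ lies outside $R$ and so forces $\deg_R(S) \ge 2$; hence $\deg_R(S) = 2$. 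Because this argument only uses that $S$ is projective of rank~$2$, it survives base change to any connected, Noetherian, commutative $R$-algebra $S'$, yielding $\gdeg_R(S) = 2$ as well.

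To produce the standard involution, I would apply Lemma~\ref{charpoly} to obtain $\chi(y;T) = T^2 - t(y)T + n(y) \in R[T]$ for each $y \in S$ and define $\overline{y} := t(y) - y$. Since $t(y)$ and $n(y)$ arise as the trace and determinant of left multiplication by $y$, patched globally, the map $t \colon S \to R$ is $R$-linear, so $\overline{\phantom{x}}$ is $R$-linear; the identities $\overline{1} = 1$, $\overline{\overline{y}} = y$, and $y\overline{y} = n(y) \in R$ follow respectively from $t(1) = 2$, $t(r) = 2r$ for $r \in R$, and $\chi(y;y) = 0$. The remaining axiom, anti-multiplicativity, reduces by the commutativity of $S$ to $\overline{xy} = \overline{x}\,\overline{y}$, which I would verify locally on the presentation $S_\frakp = R_\frakp[e]/(e^2 - \alpha e - \beta)$ by direct expansion in the basis $\{1,e\}$.

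For uniqueness, let $\sigma$ be any standard involution on $S$, and localize at $\frakp$. For any $y \in S_\frakp \setminus R_\frakp$ the algebra $R_\frakp[y] = S_\frakp$ is free of rank~$2$, so by Lemma~\ref{projfree} the monic degree-$2$ polynomial satisfied by $y$ is unique; since $y$ satisfies both $\chi(y;T)$ and $T^2 - (y + \sigma(y))T + y\sigma(y)$, these polynomials coincide and give $\sigma(y) = t(y) - y = \overline{y}$. Hence $\sigma = \overline{\phantom{x}}$ on every $S_\frakp$, and therefore globally. The main obstacle is the anti-multiplicativity of $\overline{\phantom{x}}$: unlike the other axioms, it is not a formal consequence of the existence of $\chi(y;T)$, and I would handle it via the explicit local presentation above, where the identity becomes a short, direct computation.
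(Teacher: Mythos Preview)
Your argument is essentially correct and runs parallel to the paper's, but there is one slip in the uniqueness paragraph. You assert that $R_\frakp[y] = S_\frakp$ is free of rank~$2$ for \emph{every} $y \in S_\frakp \setminus R_\frakp$; this fails when $R_\frakp$ is not a field. For instance, with $R_\frakp = \Z/p^2\Z$ and $S_\frakp = R_\frakp[\eps]/(\eps^2)$, the element $y = p\eps$ lies outside $R_\frakp$ yet satisfies $py = 0$, so $1,y$ are $R_\frakp$-linearly dependent (cf.\ the example following Corollary~\ref{projfreedeg2}). The repair is immediate: since both $\sigma$ and $\overline{\phantom{x}}$ are $R_\frakp$-linear and fix~$1$, it suffices to compare them on a single generator $x$ with $S_\frakp = R_\frakp \oplus R_\frakp x$; for such $x$ the elements $1,x$ form a basis, so $R_\frakp[x]=S_\frakp$ is free of rank~$2$ and Lemma~\ref{projfree} applies as you intended. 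This is exactly how the paper argues uniqueness in the free case.

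Apart from this, your route differs from the paper's only in how the involution is produced globally. The paper treats the free case first---reading off the unique minimal polynomial of a generator via Lemma~\ref{projfree} and extending $\overline{x}=t-x$ by $R$-linearity---and then patches the local involutions over a finite open cover of $\Spec R$, using uniqueness to match on overlaps. You instead extract the global trace $t$ from Lemma~\ref{charpoly} and set $\overline{y}=t(y)-y$ directly, which bypasses the explicit patching step (that patching was already absorbed into Lemma~\ref{charpoly}). This is a mild streamlining; the remainder of your argument---commutativity via local vanishing of the commutator, $\gdeg_R(S)=2$ from stability of rank under base change, and anti-multiplicativity checked by direct expansion on a local presentation---matches the paper's proof in spirit.
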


\begin{proof}
First, suppose that $S$ is free.  Then by Lemma \ref{dirsummand}, we can write $S=R \oplus Rx=R[x]$ for some $x \in S$ and so in particular $S$ is commutative.  By Lemma \ref{projfree}, the element $x$ satisfies a unique polynomial $x^2-tx+n=0$ with $t,n \in R$, so $\deg_R(x)=\deg_R(B)=2$.  We define $\overline{\phantom{x}}:R[x] \to R$ by $\overline{x}=t-x$, and we extend the map $\overline{\phantom{x}}$ by $R$-linearity to a standard involution on $S$.  If $\overline{\phantom{x}}:S \to S$ is any standard involution then identically equation (\ref{stdinvoleq}) holds; by uniqueness, we have $t=x+\overline{x}$ and $n=x\overline{x}=\overline{x}x$, and the involution $\overline{x}=t-x$ is unique.  

We now use a standard localization and patching argument to finish the proof.  For any prime ideal $\frakp$ of $R$, the $R_\frakp$-algebra $S_\frakp$ is free.  It then follows that $S$ is commutative, since the map $R$-linear map $S \times S \to S$ by $(x,y) \mapsto xy-yx$ is zero at every localization, hence identically zero.  Further, for each prime $\frakp$, there exists $f \in R \setminus \frakp$ such that $S_f$ is free over $R_f$.  Since $\Spec R$ is quasi-compact, it is covered by finitely many such $\Spec R_f$, and the uniqueness of the involution defined on each $S_f$ implies that they agree on intersections and thereby yield a (unique) involution on $S$.  

To conclude, we must show that $\gdeg_R(S)=2$.  But any base extension of $S$ has rank at most $2$ so has degree at most $2$, and the result follows.
\end{proof}

\begin{rmk}
It follows from Lemma \ref{quad2} that in fact $\nrd(x)=\overline{x}x=x\overline{x}$.
\end{rmk}

By covering any $R$-algebra $B$ with a standard involution by quadratic algebras, we have the following corollary.

\begin{cor} \label{uniqstd}
If $B$ has a standard involution, then this involution is unique.
\end{cor}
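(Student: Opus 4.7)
The plan is to reduce to the case where $B$ is free over $R$ via localization, and then apply the uniqueness half of Lemma \ref{quad2} to the rank-$2$ subalgebras generated by basis elements.

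Suppose $\overline{\phantom{x}}_1$ and $\overline{\phantom{x}}_2$ are both standard involutions on $B$. Note that for each $x \in B$, equation (\ref{stdinvoleq}) forces $\overline{x}_i = \trd_i(x) - x$, so the difference $\overline{x}_1 - \overline{x}_2 = \trd_1(x) - \trd_2(x)$ lies in $R$. Each $\overline{\phantom{x}}_i$ extends $R_\frakp$-linearly to a standard involution on $B_\frakp$ for every prime $\frakp$ of $R$. If these extensions coincide on every $B_\frakp$, then for any $x \in B$ the element $\overline{x}_1 - \overline{x}_2 \in R$ vanishes in $R_\frakp$ for all $\frakp$, hence vanishes in $R$ (as $R$-modules are detected locally). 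This reduces the problem to the case where $R$ is local, so that $B$ is free over $R$.

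Now choose, by Lemma \ref{dirsummand}, an $R$-basis $1 = e_1, e_2, \ldots, e_n$ of $B$. Since $B$ has degree at most $2$ by (\ref{stdinvoleq}), each $e_j^2$ lies in $R + R e_j$, so $R[e_j] = R + R e_j$ as an $R$-module; because $1, e_j$ are part of an $R$-basis of $B$, they are $R$-linearly independent, and thus $R[e_j]$ is a free quadratic $R$-algebra. Each $\overline{\phantom{x}}_i$ sends $e_j$ to $\trd_i(e_j) - e_j \in R[e_j]$, so restricts to a standard involution on $R[e_j]$; by the uniqueness statement of Lemma \ref{quad2}, these restrictions coincide, giving $\overline{e_j}_1 = \overline{e_j}_2$ for every $j$. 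By $R$-linearity (and $\overline{1}_i = 1$), the two involutions agree on all of $B$.

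The only mildly technical points are verifying that each $R[e_j]$ is genuinely a free $R$-module of rank $2$ (which follows from the degree bound combined with the basis-extension property) and that each $\overline{\phantom{x}}_i$ really does extend to a standard involution on $B_\frakp$ (immediate from $R$-linearity and $R \subset R_\frakp$); no serious obstacle is expected.
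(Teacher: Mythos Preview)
Your proof is correct and follows essentially the same route as the paper: reduce to the free case by localizing, then for each basis element $e_j$ observe that $R[e_j]$ is a free quadratic $R$-algebra (the paper cites Corollary \ref{projfreedeg2} for this, you verify it directly), and invoke the uniqueness clause of Lemma \ref{quad2} together with $R$-linearity. The only cosmetic difference is that you make the local-to-global step explicit via the observation $\overline{x}_1-\overline{x}_2\in R$, whereas the paper simply asserts that localization suffices.
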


\begin{proof}
By localizing at all primes of $R$, we may assume without loss of generality that $B$ is free over $R$.  Choose a basis for $B$ over $R$.  For any element $x$ of this basis, from Corollary \ref{projfreedeg2} we conclude that $S=R[x]$ is free, hence projective; by Example \ref{trivstddinv} (if $S=R$) or Lemma \ref{quad2}, we conclude that $S$ has a unique standard involution.  By $R$-linearity, we see that $B$ itself has a unique standard involution.  
\end{proof}

For the rest of this section, we relate the (geometric) degree of $B$ to the existence of a standard involution.  We have already seen that if $B$ has a standard involution, then it has degree at most $2$.  The converse is not true, as the following example (see also Example \ref{notbehavewell}) illustrates.

\begin{exm} \label{notrlinear}
Let $R=\F_2$ and let $B$ be a Boolean ring of rank at least $3$ over $\F_2$.  Then $B$ has degree $2$, since every element $x \in B$ satisfies $x^2=x$.  The unique standard involution on any subalgebra $R[x]$ with $x \in B \setminus R$ is the map $x \mapsto \overline{x}=x+1$, but this map is not $R$-linear, since 
\[ \overline{x+y}=1+(x+y) \neq \overline{x}+\overline{y}=1+x+1+y=x+y \]
for any $x,y \in B$ such that $1,x,y$ are linearly independent over $\F_2$.  It is moreover not an involution, since if $x \neq y \in B \setminus R$ satisfy $xy \not\in R$, then
\[ \overline{xy}=1+xy \neq \overline{y}\overline{x}=(1+y)(1+x)=1+x+y+xy. \]
\end{exm}

We see from Example \ref{notrlinear} that the condition of $R$-linearity is essential.  We are led to the following key lemma.  

\begin{lem} \label{isantiauto}
Suppose that $B$ has an $R$-linear map $\overline{\phantom{x}}:B \to B$ with $\overline{1}=1$ such that $x\overline{x} \in R$ for all $x \in B$.  Then $\overline{\phantom{x}}$ is a standard involution on $B$.
\end{lem}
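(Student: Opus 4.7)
The plan has two parts: proving the involution property $\overline{\overline{x}} = x$ and the anti-automorphism $\overline{xy} = \overline{y}\,\overline{x}$. For the first, substituting $x+1$ for $x$ in the hypothesis and using $R$-linearity with $\overline{1}=1$ gives $(x+1)(\overline{x}+1) = x\overline{x} + (x+\overline{x}) + 1 \in R$, so $t(x) := x + \overline{x}$ defines an $R$-linear map $t: B \to R$ with $t(1) = 2$ and $\overline{x} = t(x) - x$. Then $t(\overline{x}) = t(t(x) - x) = 2t(x) - t(x) = t(x)$ by $R$-linearity, so $\overline{\overline{x}} = t(\overline{x}) - \overline{x} = t(x) - (t(x) - x) = x$.

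For the anti-automorphism, set $n(x) := x\overline{x} \in R$, so each $x$ satisfies $x^2 = t(x)x - n(x)$. Polarizing $n$ yields the symmetric bilinear form $B(x,y) := x\overline{y} + y\overline{x} = n(x+y) - n(x) - n(y) \in R$; substituting $\overline{y}$ for $y$ and using the involution gives $xy + \overline{y}\,\overline{x} = B(x, \overline{y}) \in R$. Combined with $t(xy) = xy + \overline{xy} \in R$, this shows
\[ r(x,y) := \overline{xy} - \overline{y}\,\overline{x} \in R, \]
and the task reduces to proving $r \equiv 0$.

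The crucial identity is $r(x, xz) = r(x, z) \cdot x$, proved by direct computation: expand $\overline{x\cdot xz} = \overline{x^2 z} = t(x)\overline{xz} - n(x)\overline{z}$ using $x^2 = t(x)x - n(x)$; expand $\overline{xz}\cdot\overline{x}$ using the definition $\overline{xz} = \overline{z}\,\overline{x} + r(x,z)$ together with $\overline{x}^2 = t(x)\overline{x} - n(x)$ (which holds because $t(\overline{x}) = t(x)$ and $n(\overline{x}) = \overline{x}\cdot x = n(x)$ both follow from the involution property); then subtract. The $n(x)\overline{z}$ terms cancel and the $t(x)$-terms combine into $t(x)\,r(x,z)$, yielding $r(x,xz) = r(x,z)(t(x)-\overline{x}) = r(x,z)\cdot x$. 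Since $r(x, xz) \in R$, this produces the stronger constraint $r(x,z)\cdot x \in R$ for all $x, z \in B$.

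To close, localize at each prime if necessary to assume $B$ is free over $R$, choose a basis $1 = e_1, \dots, e_n$ of $B$ (possible by Lemma \ref{dirsummand}), and work over $R' = R[T_1, \dots, T_n]$ with $\xi = \sum_i T_i e_i \in B' = B \otimes_R R'$. The hypothesis extends to $B'$ (one verifies $\xi\overline{\xi} \in R'$ by expanding and invoking $n(e_i), B(e_i, e_j) \in R$), so $r(\xi, z)\cdot\xi \in R'$ for any fixed $z \in B$. Since $r(\xi, z)\xi = \sum_{i,j} T_i T_j\, r(e_i, z)\, e_j$ must lie in $R'\cdot 1$, the coefficient of $e_j$ for $j \geq 2$, namely $T_j\cdot r(\xi, z)$, vanishes; as $T_j$ is a nonzerodivisor in $R[T]$, $r(\xi, z) = \sum_i T_i\, r(e_i, z) = 0$ in $R'$, forcing $r(e_i, z) = 0$ for every $i$, and therefore $r \equiv 0$ by bilinearity. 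The main obstacle is finding the identity $r(x, xz) = r(x, z)\cdot x$: the passive scalar fact $r \in R$ is by itself too weak, but this identity upgrades it to a genuine linear-type constraint on elements of $B$ that the universal specialization argument can exploit.
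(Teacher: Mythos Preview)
Your proof is correct, and it takes a genuinely different route from the paper's.

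The paper's argument proceeds as follows: after localizing, take $x,y$ belonging to a basis containing $1$, write $xy=a+bx+cy+z$, and replace $x$ by $x-c+1$ so that the coefficient of $y$ becomes $1$; this guarantees that $1,xy$ extends to a basis, so by Corollary~\ref{projfreedeg2} the ring $R[xy]$ is free of rank $2$. One then observes that $(xy)(\overline{y}\,\overline{x})\in R$ and $xy+\overline{y}\,\overline{x}\in R$, so $xy$ satisfies the monic quadratic $T^2-(xy+\overline{y}\,\overline{x})T+(\overline{y}\,\overline{x})(xy)$ as well as the quadratic $T^2-(xy+\overline{xy})T+\overline{xy}(xy)$; uniqueness of the minimal polynomial (Lemma~\ref{projfree}) forces $\overline{xy}=\overline{y}\,\overline{x}$.

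Your approach instead isolates the scalar defect $r(x,y)=\overline{xy}-\overline{y}\,\overline{x}\in R$ directly, then leverages the identity $r(x,xz)=r(x,z)\cdot x$ to force $r(x,z)\cdot x\in R$, which a generic-variable argument over $R[T_1,\dots,T_n]$ converts into $r\equiv 0$. This avoids the substitution trick needed to make $R[xy]$ free and the appeal to Lemma~\ref{projfree}; in exchange it requires verifying that the hypothesis $x\overline{x}\in R$ passes to the polynomial extension (which you do via the polarization identity). Your treatment is also slightly more thorough in that you explicitly verify $\overline{\overline{x}}=x$, which the paper uses (in the line $(x+\overline{y})(\overline{x+\overline{y}})$) but does not spell out. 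Both arguments are clean; the paper's is shorter and stays inside $B$, while yours is more systematic and arguably generalizes more readily.
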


\begin{proof}
We must prove that $\overline{\phantom{x}}$ is an anti-involution, i.e., $\overline{xy}=\overline{y}\,\overline{x}$ for all $x,y\in B$.  We can check that this equality holds over all localizations, so we may assume that $B$ is free over $R$.  Since $\overline{\phantom{x}}$ is $R$-linear, we may assume $x,y \in B \setminus R$ are part of an $R$-basis for $B$ which includes $1$.  Write $xy=a+bx+cy+z$ with $z$ linearly independent of $1,x,y$.  Replacing $x$ by $x-c+1$ (again using $R$-linearity), we may assume without loss of generality that $c=1$.  It follows that $1,xy$ belongs to a basis for $B$, so by Corollary \ref{projfreedeg2} we have $R[xy]$ free over $R$.  

Now notice that 
\[ (xy)(\overline{y}\,\overline{x})=x(y\overline{y})\overline{x}=(x\overline{x})(y\overline{y})
=(\overline{y}y)(\overline{x}x)=(\overline{y}\,\overline{x})(xy) \in R \] 
and also (using $R$-linearity one last time)
\[ xy+\overline{y}\,{\overline{x}}=(x+\overline{y})(\overline{x+\overline{y}})-x\overline{x}-y\overline{y} \in R. \] 
But then 
\[ (xy)^2 - (xy + \overline{y}\,\overline{x})xy + (\overline{y}\,\overline{x})(xy) = 0 \]
as well as
\[ (xy)^2 - (xy + \overline{xy})xy + \overline{xy}(xy) = 0 \]
and so by the uniqueness in Lemma \ref{projfree} we conclude that $\overline{xy}=\overline{y}\,\overline{x}$.
\end{proof}

With this lemma in hand, we prove the following central result.

\begin{prop} \label{stdinvolgdeg}
$B$ has a standard involution if and only if $\gdeg_R(B) \leq 2$.
\end{prop}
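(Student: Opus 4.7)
The plan is to prove the two implications separately; the forward direction is short, while the backward direction carries the content.

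For the forward direction, suppose $B$ has a standard involution $\overline{\phantom{x}}$. The paper already notes that every $x \in B$ then satisfies $x^2 - \trd(x)x + \nrd(x) = 0$, so $\deg_R(B) \le 2$. The key observation is that a standard involution base-changes: for any $R$-algebra $S$, extend $\overline{\phantom{x}}$ to $B \otimes_R S$ by $S$-linearity. For $x = \sum_i b_i \otimes s_i$ one expands $x\overline{x}$ and groups the off-diagonal terms as $b_i \overline{b_j} + b_j \overline{b_i}$, which lies in $R$ because $(b_i+b_j)\overline{(b_i+b_j)} \in R$ together with $b_i\overline{b_i}, b_j\overline{b_j} \in R$. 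Thus $x \overline{x} \in S$, so $B \otimes_R S$ inherits a standard involution and has degree at most $2$; taking the maximum over $S$ gives $\gdeg_R(B) \le 2$.

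For the backward direction, suppose $\gdeg_R(B) \le 2$. Because Corollary \ref{uniqstd} guarantees uniqueness of a standard involution once it exists, it suffices to construct one locally and patch by uniqueness, so we may assume $B$ is free over $R$ with a basis $1 = x_1, x_2, \ldots, x_n$. Apply Lemma \ref{deguniv} to the universal element $\xi = \sum_i a_i x_i \in B \otimes_R R[a]$ to get $\deg_{R[a]}(\xi) \le 2$. Next, show $1$ and $\xi$ are $R[a]$-linearly independent: in any relation $f(a) + g(a)\xi = 0$, reading off the coefficient of $x_i$ for $i \ge 2$ yields $g(a) a_i = 0$, and since $a_i$ is a nonzerodivisor in the polynomial ring $R[a]$ we conclude $g = 0$ and then $f = 0$. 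Hence $R[a][\xi]$ is free of rank $2$ over $R[a]$, and by Lemma \ref{projfree} there exist unique $T(a), N(a) \in R[a]$ with $\xi^2 - T(a)\xi + N(a) = 0$.

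A homogeneity argument (replace $a$ by $\lambda a$ and invoke uniqueness) forces $T(a)$ to be homogeneous of degree $1$ and $N(a)$ homogeneous of degree $2$; write $T(a) = \sum_i t_i a_i$. To pin down $t_1 = 2$, apply the same uniqueness to $\eta = \xi - a_1 = \sum_{i \ge 2} a_i x_i$ to obtain its own degree-$2$ relation $\eta^2 - T'(a_2,\ldots,a_n)\eta + N'(a_2,\ldots,a_n) = 0$; substituting $\xi = a_1 + \eta$ into the expansion of $\xi^2$ and comparing with $T(a)\xi - N(a)$ via uniqueness yields $T(a) = 2a_1 + T'(a_2,\ldots,a_n)$. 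Now define the $R$-linear map $\overline{\phantom{x}} : B \to B$ by $\overline{x_i} = t_i - x_i$; then $\overline{1} = 1$ and $\overline{\xi} = T(a) - \xi$ by $R[a]$-linearity, so $\xi \overline{\xi} = T(a)\xi - \xi^2 = N(a) \in R[a]$. Specializing $a_i$ to arbitrary elements of $R$ gives $x\overline{x} \in R$ for every $x \in B$, and Lemma \ref{isantiauto} promotes $\overline{\phantom{x}}$ to a standard involution.

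The main obstacles I anticipate are the bookkeeping needed to extract the normalization $t_1 = 2$ (so that $\overline{1} = 1$ on the nose, rather than merely some scalar), and organizing the local construction so that the patching in the projective-but-not-free case is clean; both are handled via the uniqueness provided by Lemma \ref{projfree} and Corollary \ref{uniqstd}.
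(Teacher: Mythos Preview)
Your argument is correct and follows the same overall architecture as the paper's proof: reduce to the free case, work with the universal element $\xi \in B \otimes_R R[a]$, extract a degree-$1$ trace form, and invoke Lemma~\ref{isantiauto}, then patch via Corollary~\ref{uniqstd}. The differences are in execution rather than strategy, but they are worth noting.

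For the forward direction you take a cleaner route than the paper: rather than writing down explicit polynomials $t(a)$ and $n(a)$ for $\xi$ and checking they vanish on coordinate points, you simply observe that a standard involution base-changes along any $R \to S$ (using the identity $b_i\overline{b_j}+b_j\overline{b_i} \in R$), so every $B \otimes_R S$ has degree $\le 2$. This is more conceptual and avoids the bookkeeping.

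For the backward direction you and the paper diverge in how the homogeneity of $T(a)$ is obtained: the paper uses the total-degree grading on $B \otimes_R R[a]$ and reads off the degree-$2$ component of the relation $\xi^2 - t(a)\xi + n(a)=0$; you instead first prove $R[a][\xi]$ is free of rank $2$ (so the relation is \emph{unique}) and then force homogeneity by the substitution $a \mapsto \lambda a$. Your extra step establishing uniqueness pays off, because it lets you run the $\eta = \xi - a_1$ comparison and pin down $t_1 = 2$, hence $\overline{1}=1$, which is a hypothesis of Lemma~\ref{isantiauto} that the paper's write-up does not explicitly verify. One small point to tighten: when you say ``apply the same uniqueness to $\eta$,'' make explicit that $\deg_{R[a_2,\dots,a_n]}(\eta)\le 2$ follows directly from the hypothesis $\gdeg_R(B)\le 2$ applied to $S=R[a_2,\dots,a_n]$; and dispose of the trivial case $B=R$ (where $n=1$ and your linear-independence argument for $1,\xi$ needs some $i\ge 2$) at the outset.
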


\begin{proof}
First, suppose that $B$ is free with basis $x_1,\dots,x_m$.  We refer to Lemma \ref{deguniv}; consider the element $\xi=a_1x_1 + \dots + a_mx_m \in B_{R[a]}$, with $R[a]=R[a_1,\dots,a_m]$ a polynomial ring.

The total degree map on $R[a]$ defines a grading of $R[a]$.  We have a natural induced grading on $B_{R[a]}$ as an $R[a]$-module, taking coefficients in the basis $x_1,\dots,x_m$.  Since the coefficients of multiplication in $B_{R[a]}$ are elements of $R$ and so have degree $0$, we see that this grading respects multiplication in $B$.  In this grading, the element $\xi$ has degree $1$.  

Suppose that $\gdeg_R(B) \leq 2$.  The proposition is true if $B=R$, so we may assume $\gdeg_R(B)=2$.  Then $\deg_{R[a]}(\xi)=2$, so there exist polynomials $t(a), n(a) \in R[a]$ such that 
\[ \xi^2-t(a)\xi+n(a)=0. \]
This equality must hold in each degree, so looking in degree $2$ we may assume that $t(a)$ has degree $1$ (and $n(a)$ has degree $2$).  By specialization, it follows that $t(a)$ induces an $R$-linear map $\overline{\phantom{x}}:B \to B$ by $x \mapsto t(x)-x$ with the property that $x\overline{x}=n(x) \in R$ for all $x \in B$.  This map is then a standard involution by Lemma \ref{isantiauto}.

Conversely, suppose that $B$ has a standard involution.  Define the maps (of sets) $t,n:B \to R$ by $\trd(x)=x+\overline{x}$ and $\nrd(x)=x\overline{x}$ for $x \in B$, so that $x^2-\trd(x)x+\nrd(x)=0$ for all $x \in B$.  Define 
\[ t(a)=\sum_{i=1}^{n} \trd(x_i)a_i \in R[a] \]
and
\[ n(a)=\sum_{i=1}^n \nrd(x_i)a_i^2 + \sum_{1\leq i < j \leq n} (\nrd(x_i+x_j)-\nrd(x_i)-\nrd(x_j))a_ia_j \in R[a]. \]
Then $t(a)$ has degree $1$ and $n(a)$ has degree $2$.  Now consider the element 
\begin{equation} \label{haha} 
\xi^2-t(a)\xi+n(a) = \sum_{k=1}^{n} c_k(a) x_k \in B_{R[a]}.
\end{equation}
Each polynomial $c_k(a) \in R[a]$ in (\ref{haha}) has degree $2$.  If we let $e_i$ be the coordinate point $(0,\dots,0,1,0\dots,0)$ with $1$ in the $i$th place for $i=1,\dots,m$, then by construction $c_k(e_i)=c_k(e_i+e_j)=0$ for all $i,j$, and therefore $c_k(a)=0$ identically.  Therefore $\deg_{R[a]}(\xi)=2$ and $\gdeg_R(B)=2$, as claimed.

Now let $B$ be an arbitrary $R$-algebra.  If $\gdeg_R(B) \leq 2$, then by localization and uniqueness (Corollary \ref{uniqstd}) the result follows from the case where $B$ is free.  Conversely, if $B$ has a standard involution, we conclude that $\gdeg_R(B_\frakp) \leq 2$ for all primes $\frakp \in B$.  The result then follows from Lemma \ref{lowersemimax}.
\end{proof}

We conclude this section by relating the existence of a standard involution to degree (not geometric degree).

\begin{prop} \label{deg2trace}
Suppose that $\deg_R(B)=2$ and suppose that there exists $a \in R$ such that $a(a-1)$ is a nonzerodivisor.  Then there is a  standard involution on $B$.
\end{prop}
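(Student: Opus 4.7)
The plan is to construct a standard involution on $B$ directly and verify its properties via Lemma \ref{isantiauto}. Since $a(a-1)$ remains a nonzerodivisor in every localization $R_\frakp$, and since standard involutions are unique (Corollary \ref{uniqstd}) so local ones patch to a global one, we may assume $R$ is local and $B$ is free. Using Lemma \ref{dirsummand}, fix an $R$-basis $1, y_1, \ldots, y_{n-1}$ of $B$, set $M = \bigoplus_i R y_i$ so that $B = R \oplus M$, and (using $\deg_R(B) = 2$) pick $t_i, n_i \in R$ with $y_i^2 = t_i y_i - n_i$ for each $i$.

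The central step, and the main obstacle, is to establish the cross-term identity
\[ y_i y_j + y_j y_i = c_{ij} + t_j y_i + t_i y_j \]
for each $i < j$ and some $c_{ij} \in R$; this is the sole place the hypothesis on $a$ is used. To prove it, expand $y_i y_j + y_j y_i = c_0 + \sum_k c_k y_k$ in the basis and apply the degree hypothesis to the element $y_i + a y_j \in B$: writing $(y_i + a y_j)^2 = t'(y_i + a y_j) - n'$, expanding the left side using the relations for $y_i^2, y_j^2$, and the expansion of $y_i y_j + y_j y_i$, and comparing basis coefficients, the $y_k$-equation for $k \neq i, j$ forces $c_k = 0$ (since $a$ is a nonzerodivisor, being a factor of $a(a-1)$), while the $y_i$- and $y_j$-equations combine to yield $c_j - a c_i = t_i - a t_j$. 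Carrying out the same computation with $a$ replaced by $a - 1$ (also a nonzerodivisor) gives a second such relation; subtracting and using that $a - (a-1) = 1$ is a unit yields $c_i = t_j$, and back-substitution gives $c_j = t_i$.

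With the cross-term identity in hand, define the $R$-linear map $t: M \to R$ by $t(y_i) = t_i$. A direct basis computation, substituting $y_i^2 = t_i y_i - n_i$ and the cross-term identity into the expansion of $\eta^2$ for $\eta = \sum c_i y_i$, shows that the coefficient of $y_k$ in $\eta^2$ equals $c_k t(\eta)$ for every $k$; hence $\eta^2 - t(\eta) \eta \in R$. Extending to $B$ via $\trd(r + \eta) = 2r + t(\eta)$ and setting $\overline{x} = \trd(x) - x$, one checks directly that $\overline{1} = 1$ and
\[ x \overline{x} = r^2 + r t(\eta) - (\eta^2 - t(\eta) \eta) \in R \]
for all $x = r + \eta \in B$. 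Lemma \ref{isantiauto} then gives that $\overline{\phantom{x}}$ is a standard involution. The hypothesis on $a$ is genuinely necessary: Example \ref{notrlinear} over $R = \F_2$ shows that the cross-term identity, and thus the construction, can fail without it.
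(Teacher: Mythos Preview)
Your proof is correct and follows essentially the same approach as the paper's: reduce to the free case, define the candidate involution via the trace coefficients $t_i$ on a basis, establish the cross-term identity $y_iy_j+y_jy_i \equiv t_jy_i+t_iy_j \pmod R$ using the degree hypothesis applied to perturbed basis elements, and finish with Lemma~\ref{isantiauto}. The only cosmetic difference is that you obtain the cross-term identity by specializing at the two values $a$ and $a-1$ and solving the resulting linear system, whereas the paper carries out a single parametric computation with $(x+ay)^2$ (after first using $a=1$ to express $xy+yx$ modulo $R$) and arrives at the equation $a(a-1)(u-t(x)-t(y))=0$ in one stroke; your explicit verification that the coefficient of $y_k$ in $\eta^2$ equals $c_k\,t(\eta)$ is likewise just an unpacking of the bilinearity the paper invokes tacitly.
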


\begin{proof}
Again by localization and uniqueness, we may suppose that $B$ is free with basis $x_1,\dots,x_m$ with $x_1=1$.  Thus for each $i$, the algebra $S_i=R[x_i]$ is free and by Lemma \ref{quad2} there is a unique standard involution on $S_i$.  This involution extends by $R$-linearity to a map $\overline{\phantom{x}}: B \to B$, which (for the moment) is just an $R$-linear map whose restriction to each $S_i$ is a standard involution.  For $x \in B$, we define $t(x)=x+\overline{x}$ and $n(x)=x\overline{x}$.  

We need to show that in fact $n(x) \in R$ for all $x \in B$, for then $\overline{\phantom{x}}$ is a standard involution by Lemma \ref{isantiauto}.  Let $x,y \in B$ satisfy $n(x),n(y) \in R$.  Since
\begin{align*} 
n(x+y) &= (x+y)(\overline{x+y})=x\overline{x}+y\overline{x}+x\overline{y}+y\overline{y} \\
&= n(x)+n(y)+t(y)x+t(x)y-(xy+yx)
\end{align*}
we have $n(x+y) \in R$ if and only if $xy+yx-t(y)x+t(x)y \in R$, or equivalently if 
\[ (x+y)^2-t(x+y)(x+y) \in R. \] 
By this criterion, it is clear that $n(x+y) \in R$ if and only if $n(ax+by) \in R$ for all $a,b \in R$.  So it is enough to prove that $n(x+y) \in R$ when $1,x,y$ is part of a basis for $B$ with $n(x),n(y) \in R$.  

Let $a \in R$.  By Lemma \ref{projfreedeg2}, since $x+ay$ is contained in a basis for $B$ we have that $R[x+ay]$ is free over $R$ .  Letting $a=1$, we have that $R[x+y]$ is free so $x+y$ satisfies a unique polynomial of degree $2$ over $R$, hence there exists a unique $u \in R$ such that $(x+y)^2-u(x+y) \in R$.  From the above, $n(x+y) \in R$ if and only if $u=t(x+y)$.  

We have
\[ (x+ay)^2 = x^2+a(xy+yx)+a^2y^2 = a(xy+yx) + t(x)x + a^2t(y)y \in B/R \]
and since 
\[ xy+yx = (x+y)^2-x^2-y^2 = u(x+y) -t(x)x - t(y)y \in B/R \]
we have
\[ (x+ay)^2 = (au-at(x)+t(x))x + (au-at(y)+a^2t(y))y  \in B/R. \]
But $\deg_R(B)=2$, so $(x+ay)^2$ is an $R$-linear combination of $1,x+ay$.  But this can only happen if
\[ a(au-at(x)+t(x)) = (au-at(y)+a^2t(y)) \]
which becomes simply
\[ a(a-1)(u-t(x)-t(y))=0. \]
So, if $a(a-1)$ is a nonzerodivisor, then we have $u=t(x)+t(y)=t(x+y)$, as desired.
\end{proof}

We finish then by proving Theorem A.

\begin{cor} \label{cor10}
Suppose that there exists $a \in R$ such that $a(a-1)$ is a nonzerodivisor.  Then the following are equivalent:
\begin{enumroman}
\item $\deg_R(B)=2$;
\item $\gdeg_R(B)=2$;
\item $B \neq R$ and $B$ has a standard involution.
\end{enumroman}
\end{cor}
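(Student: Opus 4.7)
The plan is to read off the corollary as a direct assembly of the two main structural results of the section: Proposition \ref{stdinvolgdeg} (``$B$ has a standard involution iff $\gdeg_R(B) \leq 2$'') and Proposition \ref{deg2trace} (under the nonzerodivisor hypothesis on $a$, $\deg_R(B) = 2$ already produces a standard involution). I would also use the trivial bound $\deg_R(B) \leq \gdeg_R(B)$, obtained by taking $S = R$ in Definition \ref{gdegdefn}, and the observation that $\deg_R(B) = 1$ iff $\gdeg_R(B) = 1$ iff $B = R$: if $B = R$ then every base change is trivial, and conversely no element of $B \setminus R$ can satisfy a monic linear polynomial.

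I would proceed by a cycle of implications. For (iii) $\Rightarrow$ (ii), Proposition \ref{stdinvolgdeg} gives $\gdeg_R(B) \leq 2$, and the assumption $B \neq R$ together with the tautology $\gdeg_R(B) \geq \deg_R(B) \geq 2$ forces equality. For (ii) $\Rightarrow$ (i), Proposition \ref{stdinvolgdeg} again supplies a standard involution, whence equation \eqref{stdinvoleq} gives $\deg_R(B) \leq 2$; meanwhile $\gdeg_R(B) = 2 > 1$ rules out $B = R$, so $\deg_R(B) \geq 2$ as well, and equality holds. For (i) $\Rightarrow$ (iii), the nonzerodivisor hypothesis on $a$ is precisely what Proposition \ref{deg2trace} demands, producing a standard involution, and $\deg_R(B) = 2 \neq 1$ immediately yields $B \neq R$.

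Since all the substantive work has been carried out in Propositions \ref{stdinvolgdeg} and \ref{deg2trace}, there is no remaining obstacle here---the corollary is a pure bookkeeping exercise. The only implication in which the nonzerodivisor hypothesis plays a role is (i) $\Rightarrow$ (iii); the equivalence (ii) $\Leftrightarrow$ (iii) holds unconditionally on $R$, and the minor subtlety is just to remember that the condition $B \neq R$ singled out in (iii) is automatic once one has either $\deg_R(B) = 2$ or $\gdeg_R(B) = 2$.
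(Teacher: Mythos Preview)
Your proof is correct and follows essentially the same approach as the paper, which simply writes: ``Combine Proposition \ref{stdinvolgdeg} with Proposition \ref{deg2trace} and the trivial implication (ii) $\Rightarrow$ (i).'' The only minor stylistic difference is that you derive (ii) $\Rightarrow$ (i) by first passing through the standard involution, whereas the paper appeals directly to the tautological inequality $\deg_R(B) \leq \gdeg_R(B)$ (which you do mention in your first paragraph); both are fine.
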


\begin{proof}
Combine Proposition \ref{stdinvolgdeg} with Proposition \ref{deg2trace} and the trivial implication (ii) $\Rightarrow$ (i).
\end{proof}

\section{Commutative algebras with a standard involution and exceptional rings}

In this section, we investigate two classes of algebras with a standard involution: commutative algebras and exceptional rings.  

First note that if $B$ is a commutative $R$-algebra with a standard involution $\overline{\phantom{x}}:B \to B$, then $\overline{\phantom{x}}$ is in fact an $R$-algebra automorphism.  

\begin{prop} \label{commutativecase}
Let $J=\ann_R(2)=\{x \in R: 2x=0\}$ and let $B$ be a commutative $R$-algebra.  Then $B$ has a standard involution if and only if either $\rk B \leq 2$ or $B$ is generated by elements $x_1,\dots,x_n$ that satisfy $x_i^2 \in J$ for all $i$ and $x_ix_j \in JB$ for all $i \neq j$.
\end{prop}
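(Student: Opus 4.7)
The plan is to split on $\rk B$: when $\rk B \leq 2$, the $(\Leftarrow)$ direction reduces to Lemma~\ref{quad2} and the $(\Rightarrow)$ direction is vacuous, so I focus on the case $\rk B \geq 3$.

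\emph{Forward direction.} I first claim $\trd(B) \subseteq 2R$. Localizing at a prime $\frakp$ and picking a basis $1, y_2, \dots, y_n$ for $B_\frakp$, the identity
\[ 2 y_i y_j = \trd(y_i y_j) + \trd(y_i) y_j + \trd(y_j) y_i - \trd(y_i)\trd(y_j), \]
obtained from $\overline{y_i y_j} = (\trd(y_i) - y_i)(\trd(y_j) - y_j)$, combined with comparison of the coefficient of $y_i$ on both sides for distinct $i, j \geq 2$ (which exist precisely because $\rk B \geq 3$), forces $\trd(y_j) \in 2R_\frakp$. Given $R$-algebra generators $y_1, \dots, y_m$ of $B$, I choose $s_i \in R$ with $\trd(y_i) = 2 s_i$ and set $x_i := y_i - s_i$; these still generate $B$ and satisfy $\trd(x_i) = 0$. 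Then the doubled associativity identity
\[ \trd(x_i x_j)\, x_j = 2 x_j(x_i x_j) = 2 x_j^2 x_i = -2 \nrd(x_j)\, x_i, \]
applied in a local basis and using linear independence of $x_i, x_j$ (again from $\rk B \geq 3$), forces $\trd(x_i x_j) = 0$ and $\nrd(x_j) \in J$. Therefore $x_i^2 = -\nrd(x_i) \in J$, and $2 x_i x_j = \trd(x_i x_j) = 0$ puts $x_i x_j \in \ann_B(2) = JB$ (using flatness of $B$).

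\emph{Backward direction.} I build $\sigma: B \to B$ by descending the order-$2$ automorphism $\tilde\sigma$ of $R[X_1, \dots, X_n]$ sending $X_i \mapsto -X_i$ through the surjection $R[X] \twoheadrightarrow B$, $X_i \mapsto x_i$. The natural generators of its kernel $I$ are $X_i^2 - a_i$ with $a_i = x_i^2 \in J$, manifestly $\tilde\sigma$-invariant since $a_i \in R$, and $X_i X_j - \tilde b_{ij}(X)$, where $\tilde b_{ij}(X)$ lifts $x_i x_j \in JB = J \cdot B$ to a polynomial with coefficients in $J$; since $2J = 0$, the odd-degree part of $\tilde b_{ij}$ is killed, whence $\tilde b_{ij}(-X) = \tilde b_{ij}(X)$ in $R[X]$. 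Granting that $\tilde\sigma$ preserves $I$, the induced map $\sigma$ is an order-$2$ algebra automorphism of $B$, hence an anti-automorphism by commutativity. For the standard property $x\sigma(x) \in R$, I write $x = r + \sum r_i x_i + h$ with $r, r_i \in R$ and $h$ a sum of degree-$\geq 2$ monomials (so $h \in JB$ and $2h = 0$); expansion gives $x\sigma(x) = r^2 - (\sum r_i x_i)^2$ modulo $2$-annihilated terms, and $(\sum r_i x_i)^2 \in R$ follows from $x_i^2 \in J$ together with $2 x_i x_j = 0$.

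The main obstacle I anticipate is justifying that $\tilde\sigma$ preserves all of $I$, not just the natural generators above: when $B$ is not free as an $R$-module, $I$ can carry additional $R$-module relations. I plan to handle this by reducing to the local free case via projectivity of $B$, where the two quadratic families do generate $I$ and the argument above goes through, and then patching the resulting local involutions using uniqueness (Corollary~\ref{uniqstd}); alternatively, one can verify directly that for any $f(x) = 0$ in $B$, the difference $f(x) - f(-x) = 2 \cdot (\text{odd part of } f(x))$ vanishes in $B$ under the hypotheses $x_i x_j \in JB$ and $2J = 0$.
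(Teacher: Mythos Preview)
Your forward direction follows the paper's closely: both show $\trd(B)\subset 2R$ (you via the identity from $\overline{y_iy_j}$, the paper via reduction mod $2$ and equation~(\ref{xyyx})), shift to trace-zero elements, and then use the relation $\trd(xy)\,y=-2\nrd(y)\,x$ with linear independence to force $\nrd\in J$ and $2xy=0$. One small point: you invoke ``linear independence of $x_i,x_j$'' for \emph{arbitrary} algebra generators, which need not hold; the paper has the same looseness. It is easily repaired by proving the conclusions for all trace-zero $x,y$ (localize; if $1,x,y$ are dependent the degenerate case is immediate).

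For the backward direction you are in fact more careful than the paper, which asserts that the \emph{trivial} (identity) involution works. That assertion is false outside characteristic~$2$: over $R=\Z/4\Z$ with $B=R[x,y]/(x^2-2,\,y^2,\,xy)$ one has $(1+x)^2=3+2x\notin R$, so the identity is not standard, whereas your map $x\mapsto -x$, $y\mapsto -y$ is. So your choice of $\sigma:x_i\mapsto -x_i$ is the right one.

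There is, however, a real gap in your well-definedness argument. Your ``direct verification'' computes $f(x)-f(-x)=2f_{\mathrm{odd}}(x)$ and claims it vanishes from $x_ix_j\in JB$ and $2J=0$; but those hypotheses kill only the terms of degree~$\ge 3$, leaving the linear part $2\sum_i c_ix_i$, which is not obviously zero. Likewise, the quadratic families need not generate $I$ locally when the $x_i$ are redundant (linear relations enter $I$). What actually closes the gap: from $f(x)=0$ one finds $f(-x)=2f_0$ (your computation), and multiplying the relation by $x_j$ gives $2f_0\cdot x_j=0$ for every $j$ (using $x_j^2\in J$ and $x_ix_j\in JB$). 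Thus $2f_0$ annihilates every positive-degree monomial, so $2f_0\cdot B\subset R$; writing $B=R\oplus P$ as in Lemma~\ref{dirsummand} gives $2f_0\cdot P\subset R\cap P=0$, and since $P$ has positive rank this forces $2f_0=0$. Then $\tilde\sigma$ descends, and your verification of $x\sigma(x)\in R$ goes through once you also note $h^2\in R$ (cross-terms die and $(x^\alpha)^2=\prod a_i^{\alpha_i}\in R$).
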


Consequently, if a commutative $R$-algebra $B$ with $\rk B > 2$ has a standard involution, then the involution is trivial.  

\begin{proof}
Let $B$ be a commutative $R$-algebra with a standard involution and assume that $\rk B > 2$.  

First, suppose that $2=0 \in R$.  Let $1,x,y \in B$ be $R$-linearly independent.  Then by (\ref{xyyx}) we have
\[ 0=2xy=xy+yx=\trd(x)y+\trd(y)x+\nrd(x+y)-\nrd(x)-\nrd(y). \]
Therefore $\trd(x)=\trd(y)=0$.  

Now let $R$ be any commutative ring.  For any $x \in B$ such that $1,x$ is $R$-linearly independent, there exists $y \in B$ such that $1,x,y$ is $R$-linearly independent.  By the preceding paragraph, by considering the image of $x$ in the $R/2R$-algebra $B/2B$ we conclude that $\trd(x)=2u \in 2R$.  Replacing $x$ by $x-u$, we conclude that we may write $B=R \oplus B_0$ where $B_0=\{x \in B : \trd(x)=0\}$.  

Again by (\ref{xyyx}), for any $x,y \in B_0$ such that $1,x,y$ are $R$-linearly independent, we have
\[ 2xy=n=\nrd(x+y)-\nrd(x)-\nrd(y) \in R. \]
But then
\[ x(2xy)=2x^2y=-2\nrd(x)y = nx, \]
and this is a contradiction unless $n=2\nrd(x)=0$.  Thus $2xy=0$ and hence $xy \in JB$, and $x^2=a$ with $a=-\nrd(x) \in J$. 

The conversely is easily verified, equipping $B$ with the trivial standard involution. 
\end{proof}

\begin{cor}
If $2$ is a nonzerodivisor in $R$ and $\rk B > 2$ then $B$ has a standard involution if and only if $B$ is a quotient of the algebra
\[ R[x_1,\dots,x_n]/(x_1,\dots,x_n)^2 \]
for some $n \in \Z_{\geq 2}$.  

If $2=0 \in R$ and $\rk B > 2$ then $B$ has a standard involution if and only if $B$ is a quotient of the algebra 
\[ R[x_1,\dots,x_n]/(x_1^2-a_1,\dots,x_n^2-a_n) \]
for some $n \in \Z_{\geq 2}$ with $a_i \in R$.  
\end{cor}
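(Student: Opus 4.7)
The plan is to deduce both cases of the corollary directly from Proposition \ref{commutativecase} by inspecting the ideal $J = \ann_R(2) \subseteq R$ in each situation. When $2$ is a nonzerodivisor we have $J = 0$, so the conditions $x_i^2 \in J$ and $x_i x_j \in JB$ of the proposition become $x_i^2 = 0$ and $x_i x_j = 0$; when $2 = 0 \in R$ we have $J = R$, so $x_i^2 \in R$ is the only nontrivial condition (the product condition $x_ix_j \in JB = B$ is vacuous).

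For the forward direction I would invoke Proposition \ref{commutativecase} using the hypothesis $\rk B > 2$ to obtain $R$-algebra generators $x_1, \dots, x_n$ of $B$. The relations tabulated above translate directly to a surjection of $R$-algebras: in the first case $R[x_1, \dots, x_n]/(x_1, \dots, x_n)^2 \twoheadrightarrow B$, and in the second case $R[x_1, \dots, x_n]/(x_1^2 - a_1, \dots, x_n^2 - a_n) \twoheadrightarrow B$, where we set $a_i = x_i^2 \in R$.

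For the converse I would take $B$ to be a quotient of the relevant universal algebra and observe that the images $y_i$ of the $x_i$ generate $B$ as an $R$-algebra and automatically satisfy the hypotheses of Proposition \ref{commutativecase}: $y_i^2 = 0 \in J$ and $y_i y_j = 0 \in JB$ in the first case, and $y_i^2 = a_i \in R = J$ in the second (with the product condition vacuous). The ``if'' direction of that proposition then produces a standard involution on $B$. No substantive obstacle is anticipated, as the corollary amounts to rephrasing the generator-relation characterization of Proposition \ref{commutativecase} in the two extreme cases $J = 0$ and $J = R$.
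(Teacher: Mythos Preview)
Your proposal is correct and matches the paper's approach: the corollary is stated in the paper without proof, as an immediate specialization of Proposition \ref{commutativecase} to the cases $J=\ann_R(2)=0$ and $J=R$, which is exactly what you carry out.
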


We now investigate the class of exceptional rings, first defined in the introduction.  Let $M$ be a projective $R$-module $M$ of rank $n-1$ and let $t:M \to R$ be an $R$-linear map.  Then we define the $R$-algebra $B=R \oplus M$ by the rule $xy = t(x)y$ for $x,y \in M$.  This algebra is indeed associative because
\[ (xy)z=(t(x)y)z=t(x)yz=x(yz) \]
for all $x,y,z \in M$ (since $yz=t(y)z \in M$).  The map $\overline{\phantom{x}}:M \to M$ by $x \mapsto t(x)-x$ is an $R$-linear map, and since $x^2=t(x)x$ we have $x\overline{x}=0 \in R$ for all $x \in M$.  We conclude by Lemma \ref{isantiauto} that $\overline{\phantom{x}}$ defines a standard involution on $B$.  

\begin{defn} \label{exceptdefn}
An $R$-algebra $B$ of rank $n$ is an \emph{exceptional ring} if there is a left ideal $M \subset B$ such that $B=R \oplus M$ and the map $M \to \Hom_R(M,B)$ given by left multiplication factors through a linear map $t:M \to R$.
\end{defn}

It follows from the preceding paragraph that an exceptional ring has a standard involution.  Since a standard involution is necessarily unique (Corollary \ref{uniqstd}), if $B=R \oplus M$ is exceptional, corresponding to $t:M \to R$, then we automatically have $t=\trd|_M$.  If $R \to S$ is a ring homomorphism and $B$ is an exceptional ring over $R$ then $B \otimes_R S$ is an exceptional ring over $S$.

\begin{exm} \label{quadraticexcept}
Suppose $B$ is a quadratic $R$-algebra.  Then $B$ is a free exceptional ring if and only if $B \cong R \times R$ or $B \cong R[x]/(x^2)$.  Moreover, the splitting $B=R \oplus M$ (with $\rk_R(M)=1$) is unique up to replacing $M$ by its conjugate $\overline{M}$.
\end{exm}

\begin{lem} \label{canonsplitBRM}
If $B$ is an exceptional ring and $\rk(B)>2$, then the splitting $B=R \oplus M$ is unique.
\end{lem}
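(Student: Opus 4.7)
The plan is to localize to the free case and then exploit the exceptional-ring identity on the second summand $M'$ to force its $R$-component to vanish. Let $B = R \oplus M = R \oplus M'$ be two exceptional decompositions, with $t = \trd|_M$ and $t' = \trd|_{M'}$; these both use the unique involution from Corollary \ref{uniqstd}. Both $M$ and $M'$ are direct summands of the projective module $B$, hence are themselves projective, and equality of submodules of $B$ may be checked after localizing at each prime of $R$. So I may assume $B$ is free of rank $n \geq 3$.

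Let $\alpha : M' \to R$ and $\beta : M' \to M$ denote the two projections induced by the splitting $B = R \oplus M$, so that each $m' \in M'$ decomposes as $m' = \alpha(m') + \beta(m')$. A quick check shows $\beta$ is an isomorphism: it is injective because $M' \cap R = 0$, and surjective because any $m \in M$ can be written as $m = r + m'$ with $r \in R$ and $m' \in M'$ via the other splitting, whence $\beta(m') = m$.

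The crucial observation is the exceptional identity $m'_1 m'_2 = \trd(m'_1) m'_2$ on $M'$. Writing $m'_i = r_i + m_i$ with $r_i = \alpha(m'_i)$ and $m_i = \beta(m'_i)$, and using $m_1 m_2 = t(m_1) m_2$ in $M$ together with $\trd(m'_i) = 2r_i + t(m_i)$ (since the involution sends $r + m$ to $r + t(m) - m$), I expand both sides of this identity and compare $M$-components to obtain the key relation
\[ r_2 m_1 = r_1 m_2 \quad \text{in } M, \text{ for all } m'_1, m'_2 \in M'. \]
Setting $f = \alpha \circ \beta^{-1} : M \to R$, this reads $f(m_2) m_1 = f(m_1) m_2$ for all $m_1, m_2 \in M$.

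To finish, I pick an $R$-basis $e_1, \dots, e_{n-1}$ of $M$, which exists because $\rk_R M = n - 1 \geq 2$. Applying the relation with $m_1 = e_i$ and $m_2 = e_j$ for $i \neq j$ gives $f(e_j) e_i = f(e_i) e_j$, and linear independence of $e_i, e_j$ forces $f(e_i) = 0$; hence $f = 0$, so $\alpha = 0$, and $M' \subseteq M$. For any $m \in M$, the decomposition $m = r + m'$ along $B = R \oplus M'$ then gives $r = m - m' \in R \cap M = 0$, so $m = m' \in M'$ and $M = M'$. The main obstacle is identifying the right identity to exploit in the third step; the hypothesis $\rk B > 2$ enters precisely through the final linear-independence argument, and is essential in view of Example \ref{quadraticexcept}, where in rank $2$ the conjugate $\overline{M}$ provides a genuinely distinct splitting.
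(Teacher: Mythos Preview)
Your proof is correct and follows essentially the same approach as the paper's: both arguments localize to the free case, expand a product of two elements using the exceptional identities attached to each splitting, and compare components to force the scalar parts to vanish. The only cosmetic difference is that you work with elements of $M'$ decomposed along $R \oplus M$ and package the comparison via the linear functional $f = \alpha \circ \beta^{-1}$, whereas the paper works with elements of $M$ decomposed along $R \oplus M'$ and reads off the vanishing of the scalar part directly from the $x'$-coefficient; the underlying computation is the same.
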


\begin{proof}
Localizing, we may assume that $B$ and $M$ are free as $R$-modules.  Suppose that $B=R \oplus M = R \oplus M'$ are splittings associated to linear maps $t:M\to R$ and $t':M' \to R$.  Let $x,y \in M$ be such that $1,x,y$ are $R$-linearly independent.  Then $x=r+x'$ and $y=s+y'$ with $r,s \in R$ and $x',y' \in M'$.  From $xy=t(x)y$ we have
\[ (r+x')(s+y')=rs+sx'+(r+t'(x'))y' = t(x)(s+y')=t(x)s + t(x)y'. \]
Since $1,x',y'$ are $R$-linearly independent, we conclude from the coefficient of $x'$ that $s=0$ and hence $y=y'\in N$.  Interchanging the roles of $x$ and $y$ we find $x=x' \in N$ as well.  
\end{proof}

\begin{rmk} \label{equivexcept}
Consequently, there is an equivalence of categories between the category of exceptional rings of rank $n>2$, with morphisms isomorphisms, and the category of $R$-linear maps $t:M \to R$ with $M$ projective of rank $n-1>1$, where a morphism between $t:M \to R$ and $t':M' \to R$ is simply a map $f: M \to M'$ such that $t' \circ f = t$.  
\end{rmk}

We will show in the next section that if $\rk B=3$ and $B$ has a standard involution then $B$ is exceptional.

\begin{lem} \label{exceptlocal}
An $R$-algebra $B$ with $\rk(B)>2$ is exceptional if and only if $B_\frakp$ is exceptional for all primes $\frakp$ of $R$.
\end{lem}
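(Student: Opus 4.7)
\emph{Plan.} The forward direction is immediate: if $B = R \oplus M$ is exceptional with structure map $t \colon M \to R$, then base-changing along $R \to R_\frakp$ gives $B_\frakp = R_\frakp \oplus M_\frakp$ exceptional via $t_\frakp$. For the converse, assume each $B_\frakp$ is exceptional. Then each $B_\frakp$ has a standard involution, so $\gdeg_{R_\frakp}(B_\frakp) \leq 2$ by Proposition \ref{stdinvolgdeg}, whence $\gdeg_R(B) \leq 2$ by Lemma \ref{lowersemimax}, and so $B$ admits a (unique) standard involution with reduced trace $\trd \colon B \to R$. I would then reformulate the local exceptional structure in terms of ring homomorphisms: giving an exceptional splitting $B_\frakp = R_\frakp \oplus M_\frakp$ is the same as giving an $R_\frakp$-linear ring homomorphism $\phi_\frakp \colon B_\frakp \to R_\frakp$ with $\phi_\frakp(1)=1$ (so that $M_\frakp = \ker\phi_\frakp$ is a two-sided ideal and $B_\frakp = R_\frakp \oplus M_\frakp$) such that $xy = \trd(x)y$ for all $x, y \in \ker\phi_\frakp$; the equivalence ``$M_\frakp$ left ideal $\Leftrightarrow$ $\phi_\frakp$ multiplicative'' is a short direct calculation using $R$-linearity. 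By Lemma \ref{canonsplitBRM}, since $\rk B > 2$, this $\phi_\frakp$ is unique.

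Next I would spread out. Because $B$ is finitely generated and projective, $\Hom_R(B, R)_\frakp = \Hom_{R_\frakp}(B_\frakp, R_\frakp)$, so $\phi_\frakp$ is the localization of some $\phi^{(f)} \in \Hom_{R_f}(B_f, R_f)$ with $f \in R \setminus \frakp$. Each of the three defining closed conditions --- normalization $\phi^{(f)}(1) = 1$, multiplicativity $\phi^{(f)}(bb') = \phi^{(f)}(b)\phi^{(f)}(b')$ for all $b, b' \in B_f$, and the factorization $xy = \trd(x) y$ for $x, y \in \ker\phi^{(f)}$ --- amounts to the vanishing of a homomorphism out of a finitely generated $R_f$-module; since each vanishes at $\frakp$, replacing $f$ by $fg$ for a suitable $g \in R \setminus \frakp$ arranges that each vanishes identically over $R_f$. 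Because $R$ is Noetherian and $\Spec R$ is quasi-compact, finitely many such $D(f_i)$ cover $\Spec R$, giving exceptional splittings $B_{f_i} = R_{f_i} \oplus M_{f_i}$ on each piece. On each overlap $D(f_i f_j)$, uniqueness at every stalk (Lemma \ref{canonsplitBRM}) forces $M_{f_i}$ and $M_{f_j}$ to agree (since a submodule of $B$ is determined by its stalks); the pieces glue to a submodule $M \subset B$, and since $B = R \oplus M$, the left-ideal property, and the factorization through $\trd$ are all local, $B = R \oplus M$ is exceptional.

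The main technical obstacle is the spread-out step: simultaneously extending the three closed conditions from a single stalk to a common open neighborhood. However, this is routine given that $B$ is finitely generated and projective, so $\End_R(B)$ and the relevant tensor products are finitely generated as well; each failure-of-a-relation is annihilated at $\frakp$ and hence by some element outside $\frakp$, and only finitely many generators need be controlled at once.
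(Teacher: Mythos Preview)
Your argument is correct. Both your proof and the paper's hinge on the same key input, Lemma~\ref{canonsplitBRM}: once $\rk(B)>2$, the exceptional splitting at each prime is unique, and uniqueness is exactly what allows the local pieces to glue. The paper's proof simply invokes this and defines $M=\{x\in B:x_\frakp\in M_\frakp\text{ for all }\frakp\}$, then says the $t_\frakp$ glue by uniqueness.

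Your route differs in two respects. First, you begin by producing a \emph{global} standard involution (via Proposition~\ref{stdinvolgdeg} and Lemma~\ref{lowersemimax}), so that $\trd$ is available from the outset and you never have to glue the maps $t_\frakp$ separately: the identity $t=\trd|_M$ does it for you. Second, you recast the exceptional splitting as an $R$-algebra retraction $\phi:B\to R$ (with $M=\ker\phi$), which turns the local datum into an element of the finitely generated module $\Hom_R(B,R)$ and makes the spread-out step transparent; the three conditions you list are indeed expressed by vanishing of maps out of finitely generated modules (note that $(x,y)\mapsto\phi(xy)-\phi(x)\phi(y)$ is genuinely $R$-bilinear, so it lives on $B\otimes_R B$). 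This is more explicit than the paper's one-line ``gluing'' and buys you a cleaner bookkeeping of what needs to be checked; the cost is the preliminary detour through $\gdeg$, which the paper avoids by gluing $t_\frakp$ directly. Either way the substance is the same: uniqueness of the local splitting is what makes the sheaf-theoretic patching go through.
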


\begin{proof}
If $B$ is exceptional then obviously $B_\frakp$ is exceptional for all primes $\frakp$.  Conversely, suppose $B_\frakp$ is exceptional for all primes $\frakp$ of $R$.  By Lemma \ref{canonsplitBRM}, we may write $B_\frakp = R_\frakp \oplus M_\frakp$ uniquely for each prime $\frakp$.  Gluing, we have $B=R \oplus M$ where
\[ M = \{x \in B : x_\frakp \in M_\frakp \text{ for all $\frakp$}\}. \]
Similarly, by uniqueness the linear maps $t_\frakp:M_\frakp \to R_\frakp$ glue to give a map $t:M \to R$ such that $xy=t(x)y$ for all $x,y \in M$.  
\end{proof}

\begin{rmk} 
Lemma \ref{exceptlocal} is false when $\rk(B)=2$, consequent to the fact that there exists a ring $R$ and an element $a \in R$ such that $a$ is a square in every localization $R_\frakp$ but $a$ itself is not a square: the algebra $B=R[x]/(x^2-a)$ is then a counterexample.  
\end{rmk}

Exceptional rings can be distinguished by a comparison of minimal and characteristic polynomials.  For an element $x \in B$, let $\mu(x;T)=T^2-\trd(x)T+\nrd(x)$ and let $\chi_L(x;T)$ (resp.\ $\chi_R(x;T)$) be the characteristic polynomial of left (resp.\ right) multiplication as in Lemma \ref{charpoly}.  Recall from Section 1 that if $x \not\in R$, then $\mu(x;T)$ is the polynomial which realizes $\deg_R(x)=2$, i.e., it is the monic polynomial of smallest degree with coefficients in $R$ which is satisfied by $x$.  Let $\Tr(x)$ denote the trace of left multiplication by $x$.  

\begin{lem}
Let $B=R \oplus M$ be an exceptional ring.  Then for all $x \in M$, we have $\mu(x;T)=T(T-\trd(x))$ and
\[ \chi_L(x;T)=T(T-\trd(x))^{n-1}=\mu(x;T)(T-\trd(x))^{n-2} \] 
so $\Tr(x)=(n-1)\trd(x)$ and 
\[ \chi_R(x;T)=T^{n-1}(T-\trd(x)). \]
\end{lem}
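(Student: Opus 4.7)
The plan is to localize so that $B$ (and hence $M$) is free, compute everything in an adapted basis, and then invoke Lemma \ref{charpoly} to patch together the global polynomial identities. By Lemma \ref{charpoly} the characteristic polynomials $\chi_L(x;T), \chi_R(x;T) \in R[T]$ are determined by their localizations, so throughout I may assume $B$ is free and fix a basis $1, e_1, \dots, e_{n-1}$ of $B$ with $e_i \in M$.

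First I would compute $\mu(x;T)$. Since $B = R \oplus M$ is exceptional with structure map $t\colon M \to R$ and the standard involution $\overline{x} = t(x) - x$ for $x \in M$ (as observed in the paragraph preceding Definition \ref{exceptdefn}), uniqueness of the standard involution (Corollary \ref{uniqstd}) gives $\trd(x) = t(x)$ for $x \in M$. The rule $xy = t(x)y$ specializes to $x^2 = t(x)x$, so $\nrd(x) = x\overline{x} = t(x)x - x^2 = 0$. Thus $\mu(x;T) = T^2 - \trd(x)T = T(T - \trd(x))$, giving the first identity.

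Next, write $x = \sum_{i=1}^{n-1} a_i e_i \in M$ and compute the matrix of left multiplication $L_x$ in the basis $1, e_1, \dots, e_{n-1}$. We have $L_x(1) = x = \sum a_i e_i$ (first column $(0, a_1, \dots, a_{n-1})^T$), and for $i \geq 1$, $L_x(e_i) = xe_i = t(x) e_i$ (column $i+1$ has $t(x)$ at the $(i+1)$-st entry and zeros elsewhere). The first row of $L_x$ is therefore identically zero, so expanding $\det(TI - L_x)$ along its first row gives
\[
\chi_L(x;T) = T \cdot (T - t(x))^{n-1} = T(T - \trd(x))^{n-1},
\]
which factors as $\mu(x;T)(T - \trd(x))^{n-2}$. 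Taking the trace of $L_x$ (sum of diagonal entries $0 + (n-1)t(x)$) yields $\Tr(x) = (n-1)\trd(x)$.

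For right multiplication, $R_x(1) = x = \sum a_j e_j$ and for $i \geq 1$, $R_x(e_i) = e_i x = t(e_i) x = t(e_i) \sum_j a_j e_j$, using again $yz = t(y)z$ for $y,z \in M$. The first row of the matrix of $R_x$ is still zero, and the bottom–right $(n-1) \times (n-1)$ block equals the rank-one outer product $\mathbf{a}\mathbf{t}^{T}$, where $\mathbf{a} = (a_i)$ and $\mathbf{t} = (t(e_i))$. By the matrix determinant lemma,
\[
\det(TI_{n-1} - \mathbf{a}\mathbf{t}^{T}) = T^{n-2}\bigl(T - \mathbf{t}^{T}\mathbf{a}\bigr) = T^{n-2}(T - t(x)),
\]
since $\mathbf{t}^{T}\mathbf{a} = \sum_i t(e_i) a_i = t\bigl(\sum_i a_i e_i\bigr) = t(x)$ by $R$-linearity of $t$. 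Expanding $\det(TI - R_x)$ along the first row multiplies this by $T$, giving $\chi_R(x;T) = T^{n-1}(T - \trd(x))$. The only real computational subtlety is the rank-one determinant for $\chi_R$; everything else is bookkeeping in the adapted basis, and Lemma \ref{charpoly} ensures the local identities assemble into the claimed global polynomials in $R[T]$.
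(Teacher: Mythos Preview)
Your proof is correct and is exactly the ``direct calculation'' the paper alludes to: after localizing to a free basis $1,e_1,\dots,e_{n-1}$ with $e_i\in M$, the matrix of $L_x$ is block lower-triangular with diagonal blocks $0$ and $t(x)I_{n-1}$, while the matrix of $R_x$ has a rank-one lower-right block $\mathbf{a}\mathbf{t}^T$, whence the stated characteristic polynomials. The only point worth flagging is that the rank-one determinant identity $\det(TI_{n-1}-\mathbf{a}\mathbf{t}^T)=T^{n-2}(T-\mathbf{t}^T\mathbf{a})$ is being used over an arbitrary commutative ring, which is fine since it is a polynomial identity.
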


\begin{proof}
This statement follows from a direct calculation.
\end{proof}

\section{Algebras of rank $3$}

We saw in Section 2 that an algebra of rank $2$ is necessarily commutative, has (geometric and constant) degree $2$, and has a (unique) standard involution.  Quadratic $R$-algebras are classified by their discriminants, and this is a subject that has seen a great deal of study (see Knus \cite{Knus}).  In this section, we consider the next case, algebras of rank $3$.

First, suppose that $B$ is a free $R$-algebra of rank $3$.  We follow Gross and Lucianovic \cite[\S 2]{GrossLuc} (see also Bhargava \cite{BhargavaCubic}).  They prove that if $B$ is commutative and $R$ is a PID or a local ring, then $B$ has an $R$-basis $1,i,j$ such that
\begin{align*}
i^2 &= -ac+bi-aj \\
j^2 &= -bd+di-cj \tag{$C$} \\
ij &= -ad
\end{align*}
with $a,b,c,d \in R$.  But upon further examination, we see that their proof works for free $R$-algebras $B$ over an arbitrary commutative ring $R$ and that their calculations remain valid even when $B$ is noncommutative, since they use only the associative laws.  If we write 
\[ ji=r+si+tj \] 
with $r,s,t \in R$, then the algebra ($C$) is associative if and only if 
\begin{equation} \label{rst}
as=dt=0 \quad \text{and} \quad r+ad=-bs=ct.
\end{equation}
For example, $B$ is commutative if and only if $r=-ad$ and $s=t=0$.

We now consider the classification of such algebras $B$ by degree.  We assume that $B$ has constant degree (otherwise see Example \ref{deg32noncomm}).  If $\deg_R(B)=3$, then $B$ is commutative by Lemma \ref{constantiscomm}.  So we are left to consider the case $\deg_R(B)=2$.  Then the coefficients of $j,i$ in $i^2,j^2$, respectively, must vanish, so $a=d=0$ in the laws ($C$), and we have $r=-bs=ct$ in (\ref{rst}).  After the equivalences of Theorem A, it is natural to consider the case where further $B$ has a standard involution.  Then 
\[ 0 = -ad = \overline{ij} = \overline{j}\,\overline{i}=(-c-j)(b-i)=-bc+ci-bj+ji \]
so $ji=bc-ci+bj$ and $r=bc$, $s=-c$, $t=b$.  Now replacing $i$ by $\overline{i}=b-i$, and letting $u=b$ and $v=-c$ we obtain the equivalent multiplication rules
\begin{equation*}
\begin{aligned}
i^2 &= ui & \qquad\qquad ij &= uj \\ 
j^2 &= vj & \qquad\qquad ji &= vi.
\end{aligned}
\tag{$NC$} 
\end{equation*}
Following Gross and Lucianovic, we call such a basis $1,i,j$ a \emph{good basis}.  Note that by definition an algebra with multiplication rules ($NC$) is exceptional, with $M=Ri \oplus Rj$.  We have therefore proven that every free $R$-algebra $B$ of rank $3$ with a standard involution is an exceptional ring.

We have shown that there is a bijection between pairs $(u,v) \in R^2$ and free $R$-algebras of rank $3$ with a standard involution equipped with a good basis.  The natural action of $GL_2(R)$ on a good basis, defined by
\begin{equation} \label{ijipjp}
\begin{pmatrix}
i \\ j \end{pmatrix} \mapsto \begin{pmatrix}
i' \\ j'
\end{pmatrix} = \begin{pmatrix}
\alpha & \beta \\
\gamma & \delta 
\end{pmatrix}
\begin{pmatrix}
i \\ j \end{pmatrix}
\end{equation}
takes one good basis to another, and the induced action on $R^2$ is simply $(u,v) \mapsto (\alpha u + \beta v, \gamma u + \delta v)$.  Therefore the set of good bases of $B$ is a principal homogeneous space for the action of $GL_2(R)$, and we have proved the following.

\begin{prop} \label{rk3tMR}
Let $N$ be a free module of rank $2$.  Then there is a bijection between the set of orbits of $GL(N)$ acting on $N$ and the set of isomorphism classes of free $R$-algebras of rank $3$ with a standard involution.
\end{prop}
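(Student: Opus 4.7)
The plan is to assemble the results established in the paragraphs preceding the proposition into the desired bijection. After fixing a basis of $N$ to identify $N \cong R^2$ and $GL(N) \cong GL_2(R)$, I would first define the forward map: send $(u,v) \in N$ to the isomorphism class of the $R$-algebra $B(u,v) = R \oplus Ri \oplus Rj$ with multiplication given by the (NC) relations. By the exceptional ring construction (with the linear map $Ri \oplus Rj \to R$ sending $i \mapsto u$ and $j \mapsto v$), $B(u,v)$ is an associative $R$-algebra of rank $3$ with a standard involution.

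Next I would verify that this map descends to a well-defined map on $GL_2(R)$-orbits. If $g \in GL_2(R)$ satisfies $g \cdot (u,v) = (u',v')$, then the $R$-linear map $\psi: B(u,v) \to B(u',v')$ sending $1 \mapsto 1$ and acting on the second summand via $g^{-1}$ (in the bases $(i,j)$ and $(i',j')$) is an algebra isomorphism---the multiplication-table computation from the paragraph preceding the proposition shows precisely that the (NC) relations transform correctly. Surjectivity onto isomorphism classes is then the statement that every free $R$-algebra $B$ of rank $3$ with a standard involution admits a good basis. This follows from the chain of reductions carried out in the text: the Gross--Lucianovic normal form gives relations $(C)$; having a standard involution forces $\deg_R(B) \leq 2$ (Proposition \ref{stdinvolgdeg}), whence $a = d = 0$; and the standard involution combined with the substitution $i \mapsto \overline{i}$ then yields (NC).

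For injectivity on orbits, I would argue as follows. Given an isomorphism $\varphi: B(u,v) \to B(u',v')$ of $R$-algebras, Lemma \ref{canonsplitBRM} applies (these are exceptional rings of rank $>2$), so the splitting $B = R \oplus M$ is canonical and hence $\varphi(R) = R$ and $\varphi(M) = M'$. Thus $\varphi$ restricts to an $R$-module isomorphism $M \to M'$, represented in the good-basis pair $(i,j)$ and $(i',j')$ by some $g \in GL_2(R)$. The transformation formula recorded in the paragraph preceding the proposition then gives $g \cdot (u,v) = (u',v')$, so $(u,v)$ and $(u',v')$ lie in the same orbit.

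The main obstacle is primarily bookkeeping: aligning the $GL(N)$-action on $N$ with the action on good bases, and confirming that an isomorphism of algebras induces an element of $GL(N)$ between the canonical ideal summands. Since all the substantive computations---associativity of (NC), the transformation law for $(u,v)$ under change of good basis, and canonicity of the splitting $B = R \oplus M$---have already been carried out in the text, the proof is essentially mechanical once these actions are correctly identified.
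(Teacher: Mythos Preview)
Your proposal is correct and follows essentially the same approach as the paper: both build the bijection from the correspondence between pairs $(u,v)$ and algebras equipped with a good basis, together with the $GL_2(R)$-equivariance of this correspondence. The one place where you are actually more explicit than the paper is injectivity: the paper simply asserts that the set of good bases of a fixed $B$ is a principal homogeneous space for $GL_2(R)$, whereas you justify the needed transitivity by invoking Lemma~\ref{canonsplitBRM} to force any isomorphism to carry $M$ onto $M'$---this is exactly the content behind the paper's ``therefore''. (A small remark: you cite Proposition~\ref{stdinvolgdeg} for $\deg_R(B)\le 2$, but this already follows directly from $x^2-\trd(x)x+\nrd(x)=0$.)
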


\begin{exm}
The map $R^2 \to R$ with $e_1,e_2 \mapsto u,v$ corresponds to the algebra $(NC)$.  In particular, the zero map $R^2 \to R$ corresponds to the commutative algebra $R[i,j]/(i,j)^2$.  
\end{exm}

\begin{rmk}
The universal element $\xi=x+yi+zj$ of the algebra $B$ defined by the multiplication rules $(NC)$ for $u,v \in R$ satisfies the polynomial
\[ \xi^2-(2x+uy+vz)\xi + (x^2+uxy+vxz) = 0 \]
hence $\gdeg_R(B)=2$ and this verifies (in another way) that any such algebra indeed has a standard involution.  

The only algebra which is both of type $(C)$ and $(NC)$ is the algebra with $u=v=0$ (or $a=b=c=d=0$), i.e., the commutative algebra $R[i,j]/(i,j)^2$.  
\end{rmk}

\begin{exm} \label{deg32noncomm}
We pause to exhibit in an explicit example the irregular behavior of an algebra which is not of constant degree.  Roughly speaking, we can glue together an algebra of degree $2$ and an algebra of degree $3$ along a degenerate algebra of degree $3$.

Let $k$ be a field and let $R=k[a,b]/(ab)$, so that $\Spec R$ is the variety of intersecting coordinate lines in the (affine) plane.  Consider the free $R$-algebra $B$ with basis $1,i,j$ and with multiplication defined by
\begin{equation*}
\begin{aligned}
i^2 &= bi-aj & \qquad\qquad ij &= -a^2 \\
j^2 &= ai-bj & \qquad\qquad ji &= b^2-a^2-bi+bj.
\end{aligned}
\end{equation*}
We note that $B$ indeed has degree $3$, since for example $i^3=b^2i + a^3$ is the monic polynomial of smallest degree satisfied by $i$.  

We have $R_{(b)} \cong k(a)$ with $B_{(b)}$ isomorphic to the algebra above with $b=0$; this algebra is commutative of rank 3, with $ij=ji=-a^2$ (and $i^2=-aj$ and $j^2=ai$).  On the other hand, we have $R_{(a)} \cong k(b)$ with $B_{(a)}$ subject to $ij=0 \neq b^2 - bi + bj = ji$ and $i^2=bi$, $j^2=-bj$, so $B_{(b)}$ is a noncommutative algebra of rank 3 and degree $2$.
\end{exm}

Now consider a (projective, not necessarily free) $R$-algebra $B$ of rank $3$ with a standard involution.  

\begin{lem} \label{canonicalsplitting3}
There exists a unique splitting $B = R \oplus M$ with $M$ projective of rank $2$ such that for all primes $\frakp$ of $R$ and any basis $i,j$ of $M_\frakp$, the elements $1,i,j$ are a good basis for $B_\frakp$.
\end{lem}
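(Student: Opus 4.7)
The plan is to reduce to the free case already handled in this section and then glue via the uniqueness results for exceptional splittings.

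First I would localize at an arbitrary prime $\frakp$ of $R$. Since $B$ is projective of rank $3$, the localization $B_\frakp$ is free of rank $3$ over $R_\frakp$ and inherits the standard involution. The analysis preceding Proposition~\ref{rk3tMR} shows that every such free algebra admits a good basis $1,i,j$ satisfying the multiplication rules $(NC)$, and the direct verification below those rules (namely $xy = \trd(x)y$ for $x,y \in R_\frakp i \oplus R_\frakp j$) shows that $B_\frakp = R_\frakp \oplus M_\frakp$ is an exceptional ring with $M_\frakp = R_\frakp i \oplus R_\frakp j$ and $t_\frakp = \trd|_{M_\frakp}$. In particular each $B_\frakp$ is exceptional, so by Lemma~\ref{exceptlocal} the algebra $B$ itself is exceptional; write $B = R \oplus M$ for the corresponding splitting. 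Since $\rk B = 3 > 2$, Lemma~\ref{canonsplitBRM} guarantees this splitting is unique, and $M$ is automatically projective of rank $2$ as a direct summand of $B$.

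Next I would check the good-basis property. For a prime $\frakp$, pick any basis $i,j$ of $M_\frakp$. Since $M_\frakp$ is the canonical complement to $R_\frakp$ given by Lemma~\ref{canonsplitBRM} and some good basis $1,i_0,j_0$ of $B_\frakp$ arises from the construction above with $R_\frakp i_0 \oplus R_\frakp j_0 = M_\frakp$, the change-of-basis matrix from $(i_0,j_0)$ to $(i,j)$ lies in $GL_2(R_\frakp)$. The principal homogeneous space statement for $GL_2(R)$ acting on good bases (recorded in the paragraph containing~\eqref{ijipjp}) then ensures that $1,i,j$ is again a good basis for $B_\frakp$.

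Finally for uniqueness, suppose $B = R \oplus M'$ is another splitting with $M'$ projective of rank $2$ satisfying the stated condition. Localizing at $\frakp$ we may choose a basis $i',j'$ of $M'_\frakp$; by hypothesis $1,i',j'$ is a good basis for $B_\frakp$, and the $(NC)$ multiplication on that basis shows $B_\frakp = R_\frakp \oplus M'_\frakp$ is an exceptional splitting. By Lemma~\ref{canonsplitBRM} applied to $B_\frakp$ we must have $M'_\frakp = M_\frakp$ for every $\frakp$, and this identification at every localization forces $M' = M$ globally.

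The only slightly delicate step is arranging that the \emph{local} good bases produced in the free case actually assemble into a globally defined submodule $M$, but this is precisely what Lemmas~\ref{canonsplitBRM} and~\ref{exceptlocal} have been set up to provide; once one knows each $B_\frakp$ is exceptional, the uniqueness of the exceptional splitting glues the $M_\frakp$ automatically, and the good-basis condition becomes a straightforward consequence of the $GL_2$-transitivity on good bases.
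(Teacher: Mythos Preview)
Your proof is correct, but it inverts the logical order relative to the paper's argument. The paper defines $M$ directly as the union of all pairs $\{i,j\} \subset B$ that satisfy the multiplication rules $(NC)$, and then checks locally that $B = R \oplus M$ is the desired splitting; only \emph{after} this lemma does the paper deduce that $B$ is exceptional. You instead first observe that each $B_\frakp$ is exceptional (from the free analysis), invoke Lemma~\ref{exceptlocal} to globalize, and then pull the good-basis property out of the exceptional splitting via the $GL_2$-transitivity of \eqref{ijipjp} and the uniqueness in Lemma~\ref{canonsplitBRM}.

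Both routes are valid. Your approach buys a cleaner gluing step, since Lemmas~\ref{canonsplitBRM} and~\ref{exceptlocal} were set up precisely to handle this, and it makes the uniqueness transparent. The cost is that you are essentially proving the first sentence of Theorem~\ref{rank3} inside this lemma, so in the paper's narrative the subsequent paragraph (deriving exceptionality from the lemma) would become redundant. The paper's approach keeps the lemma self-contained and postpones the identification with the exceptional structure, at the price of a terser argument that leaves to the reader why the ``union of good basis pairs'' is actually an $R$-submodule complementing $R$.
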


\begin{proof}
Let $M$ be the union of all subsets $\{i,j\} \subset B$ such that $i,j$ satisfy multiplication rules as in $(NC)$.  We claim that $B=R \oplus M$ is the desired splitting.  It suffices to show this locally, and for any prime $\frakp$, the module $M_\frakp$ contains all good bases for $B_\frakp$ by the calculations above, and the result follows.
\end{proof}

Let $B=R \oplus M$ as in Lemma \ref{canonicalsplitting3}.  Consider the map
\[ M \to \End_R(M). \]
According the multiplication laws $(NC)$, this map is well-defined and factors as $M \to R \subset \End_R(M)$ through scalar multiplication, since it does so locally.  It follows by definition that $B$ is an exceptional ring, and that the splitting $B=R \oplus M$ agrees with that in Lemma \ref{canonsplitBRM}.

\begin{thm} \label{rank3}
Every $R$-algebra $B$ of rank $3$ with a standard involution is an exceptional ring.  There is an equivalence of categories between the category of $R$-algebras $B$ of rank $3$ with a standard involution and the category of $R$-linear maps $t:M \to R$ with $M$ projective of rank $2$.
\end{thm}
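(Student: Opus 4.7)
The plan is to assemble pieces already in hand; much of the work is done in the discussion immediately preceding the theorem. For the first assertion I would start from Lemma \ref{canonicalsplitting3}, which provides the canonical decomposition $B = R \oplus M$ with $M$ projective of rank $2$ and the property that every local basis of $M_\frakp$ extends to a good basis of $B_\frakp$. The good-basis relations $(NC)$ show that for each prime $\frakp$, left multiplication $M_\frakp \to \End_{R_\frakp}(M_\frakp)$ factors through scalar multiplication by an $R_\frakp$-linear map $t_\frakp \colon M_\frakp \to R_\frakp$ (namely the restriction of $\trd$ to $M_\frakp$). Because both the splitting and the standard involution are unique (Lemma \ref{canonicalsplitting3} and Corollary \ref{uniqstd}), these local traces agree on overlaps and glue to a global $R$-linear map $t \colon M \to R$ with $xy = t(x) y$ for all $x, y \in M$. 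This is exactly Definition \ref{exceptdefn}, so $B$ is exceptional; alternatively one can invoke Lemma \ref{exceptlocal} to pass from the local statements directly to exceptionality.

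For the second assertion, the equivalence of categories is essentially already stated in Remark \ref{equivexcept}. With $n = 3$ it gives an equivalence between the category of exceptional rings of rank $3$ (morphisms being isomorphisms) and the category of $R$-linear maps $t \colon M \to R$ with $M$ projective of rank $2$. Combining with the first assertion, which identifies $R$-algebras of rank $3$ with a standard involution as precisely the exceptional rings of rank $3$, one gets the desired equivalence of categories. In the reverse direction one uses the construction from the paragraph preceding Definition \ref{exceptdefn}: given $t \colon M \to R$, the $R$-module $R \oplus M$ equipped with the rule $xy = t(x) y$ for $x,y \in M$ is an exceptional ring, and its standard involution is $\overline{x} = t(x) - x$ on $M$ (extended $R$-linearly).

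The main (and only) subtlety is the local-to-global gluing, but the canonical nature of the splitting in Lemma \ref{canonicalsplitting3} and the uniqueness of standard involutions ensure this is automatic; no further calculation beyond the good-basis analysis is needed. In effect the proof is a packaging of: (a) the good-basis computation that yields $(NC)$ in the free case, (b) Lemma \ref{canonicalsplitting3} to globalize the splitting, and (c) Remark \ref{equivexcept} to convert exceptionality into the promised categorical equivalence.
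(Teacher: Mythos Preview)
Your proposal is correct and follows essentially the same route as the paper: the theorem is a packaging of the good-basis computation $(NC)$ in the free case, the canonical splitting of Lemma \ref{canonicalsplitting3}, the observation that left multiplication by $M$ factors through scalars (checked locally), and Remark \ref{equivexcept} for the categorical statement. Your alternative of invoking Lemma \ref{exceptlocal} once each $B_\frakp$ is known to be exceptional is a perfectly good shortcut and amounts to the same gluing argument.
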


\begin{cor}
There is a bijection between the set of isomorphism classes of $R$-algebras of rank $3$ with a standard involution and isomorphism classes of $R$-linear maps $t:M \to R$ with $M$ projective of rank $2$.  
\end{cor}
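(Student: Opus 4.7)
The plan is to assemble the pieces already built in this section. For the first statement, begin with the canonical splitting $B = R \oplus M$ provided by Lemma \ref{canonicalsplitting3}, where $M$ is projective of rank $2$. The two conditions making $B$ exceptional in the sense of Definition \ref{exceptdefn}—that $M$ is a left ideal, and that left multiplication $M \to \Hom_R(M, B)$ factors through an $R$-linear form $t\colon M \to R$—are both local properties. At any prime $\frakp$, one picks a good basis $i, j$ of $M_\frakp$ satisfying the multiplication rules $(NC)$ with parameters $u, v \in R_\frakp$. A one-line computation from $(NC)$ shows that for $x = ai + bj \in M_\frakp$ one has $xi = (au+bv)i$ and $xj = (au+bv)j$, so left multiplication by $x$ on $M_\frakp$ is scalar multiplication by $t_\frakp(x) = au + bv \in R_\frakp$. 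In particular $M_\frakp$ is a left ideal of $B_\frakp$ and $t_\frakp$ is $R_\frakp$-linear in $x$.

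These local linear forms then glue to a global $R$-linear $t\colon M \to R$, since $t_\frakp(x)$ is characterized intrinsically as the unique scalar by which $x$ acts on $M_\frakp$ and is thus independent of the choice of good basis and compatible under further localization. The identity $xy = t(x)y$ for $x, y \in M$ holds globally because it holds on every $B_\frakp$. This exhibits $B$ as an exceptional ring.

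For the equivalence of categories, invoke Remark \ref{equivexcept}, which already records an equivalence between the category of exceptional rings of rank $n > 2$ and the category of $R$-linear maps $t\colon M \to R$ with $M$ projective of rank $n - 1 > 1$. Specializing to $n = 3$ and combining with the first part gives the stated equivalence. The only step with any real content is the gluing of the local forms $t_\frakp$ into a global $t$; this is handled by the intrinsic characterization above, so no serious obstacle arises—the substantive work (the classification via good bases in the free case, and the canonical splitting of Lemma \ref{canonicalsplitting3}) has already been done.
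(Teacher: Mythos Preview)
Your argument is correct and matches the paper's approach: the paragraph immediately preceding Theorem~\ref{rank3} carries out precisely your local computation (showing that left multiplication $M \to \End_R(M)$ factors through $R$, hence $B$ is exceptional), and Remark~\ref{equivexcept} then supplies the categorical equivalence. Note only that the corollary asserts a \emph{bijection on isomorphism classes}, not the ``stated equivalence'' you conclude with---that is Theorem~\ref{rank3} itself, and the present corollary follows from it by passing to isomorphism classes.
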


We conclude this section with the following observation.  Consider now the \emph{right} multiplication map $M \to \End_R(M)$.  When $M=R^2$ is free as in $(NC)$ with basis $i,j$, we have under this map that
\[ i \mapsto \begin{pmatrix} u & 0 \\ v & 0 \end{pmatrix}, \quad j \mapsto \begin{pmatrix} 0 & u \\ 0 & v \end{pmatrix}. \]
If $\ann_R (u,v) = (0)$, then this map is injective.  Note that $(u,v) = t(R^2) \subset R$, and $\ann(u,v)=(0)$ if and only if $B_\frakp$ is noncommutative for every prime ideal $\frakp$, in which case we say $B$ is \emph{noncommutative everywhere locally}.  We compute directly that element $k=vi-uj$ satisfies $k^2=0$, and hence is contained in the Jacobson radical of $B$.  Indeed, we have $ki=kj=0$, and of course $ik=uk$ and $jk=vk$.  In any change of good basis as in (\ref{ijipjp}), we find that $k'=(\alpha\delta-\beta\gamma)k$ with $\alpha\delta-\beta\gamma \in R^*$, so the $R$-module (or even two-sided ideal) generated by $k$ is independent of the choice of good basis, and so we denote it $J(B)$.  Note that $J(B)$ is free if and only if $\ann_R(u,v) = (0)$.

More generally, suppose that $t:M \to R$ has $\ann_R t(M) = (0)$, or equivalently that $B$ is noncommutative everywhere locally.  Then the right multiplication map is injective since it is so locally, and so the right multiplication map yields an injection $B \hookrightarrow \End_R(M)$.  By the above calculation, we see that two-sided ideals $J(B_\frakp)$ for each prime $\frakp$ patch together to give a well-defined two-sided ideal $J(B)$ of $B$ which is projective of rank $1$, and the image of $B$ in $\End_R(M)$ annihilates this rank $1$ submodule.  Conversely, given a flag $I \subset J$, we associate the subalgebra $B = R \oplus M$ where $M \subset \End_R(I \subset J)$ (acting on the right) consists of elements which annihilate $I$.  We obtain the following proposition.

\begin{prop}
There is a bijection between the set of isomorphism classes of $R$-algebras of rank $3$ with a standard involution which are noncommutative everywhere locally and flags $I \subset J$ such that $I,J$ are projective of ranks $1,2$.  
\end{prop}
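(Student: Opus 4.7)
My plan is to exhibit both constructions explicitly and verify they are mutually inverse, assembling the machinery developed in the paragraph preceding the proposition.

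For the backward direction, given a flag $(I \subset J)$ with $I, J$ projective of ranks $1, 2$, take $B := \End_R(I \subset J)$, the $R$-subalgebra of $\End_R(J)$ preserving $I$. A local rank count (lower-triangular matrices over a local ring) shows $B$ is projective of rank $3$. Define $M := \{\varphi \in B : \varphi(I) = 0\}$ and let $t : M \to R$ send $\varphi$ to the scalar by which the induced endomorphism of $J/I$ acts; this makes sense since $\End_R(J/I) = R$ as $J/I$ is invertible. A direct local computation verifies $\varphi\psi = t(\varphi)\psi$ for $\varphi, \psi \in M$, and that $B = R \oplus M$ as $R$-modules with $R$ embedded as scalars. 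Thus $B$ is exceptional, so carries a standard involution, and is noncommutative everywhere locally since $t$ is locally surjective.

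For the forward direction, given $B$ satisfying the hypotheses, write $B = R \oplus M$ via Theorem \ref{rank3} and let $J(B) \subset M$ be the canonical rank-$1$ two-sided ideal constructed in the preceding paragraph. Assign to $B$ the flag $(J(B) \subset M)$.

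To see these constructions are mutually inverse, I argue as follows. Starting from $B$, the right-multiplication injection $B \hookrightarrow \End_R(M)$ established just before the proposition has image in $\End_R(J(B) \subset M)$: the subalgebra $R$ acts by scalars (preserving every submodule), while $M$ annihilates $J(B)$ on the right. Locally both $B$ and $\End_R(J(B) \subset M)$ have rank $3$, so this injection is locally, hence globally, an isomorphism, identifying $B$ with the algebra built backward from the flag $(J(B) \subset M)$. Conversely, starting from $(I \subset J)$ and forming $B = \End_R(I \subset J)$, a local computation in a good basis adapted to the flag shows that the recovered flag $(J(B) \subset M)$ is identified with the original $(I \subset J)$.

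The main obstacle, I expect, is the global gluing in this last roundtrip: verifying that the local identifications of $(J(B) \subset M)$ with $(I \subset J)$ assemble into a single well-defined isomorphism of flags. This requires combining the uniqueness of the splitting from Lemma \ref{canonicalsplitting3} with the functorial description of $\End_R(I \subset J)$ so that the local identifications patch coherently, producing an isomorphism of pairs of projective $R$-modules rather than merely a stalkwise match.
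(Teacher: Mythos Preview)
Your two constructions coincide with those in the paper: its proof is precisely the paragraph preceding the proposition, giving the forward map $B \mapsto (J(B) \subset M)$ via right multiplication and the backward map $(I \subset J) \mapsto R \oplus \{\varphi : I\varphi = 0\}$ (which, as you observe, is all of $\End_R(I \subset J)$). The paper does not spell out the verification of mutual inverseness; your proposal attempts to supply it.

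There is, however, a genuine gap in your roundtrip $B \to (J(B) \subset M) \to \End_R(J(B) \subset M)$. You assert that the right-multiplication injection is an isomorphism because source and target are both locally free of rank $3$, but an injection of locally free modules of equal rank need not be surjective. Take $R = \Z$, $M = \Z i \oplus \Z j$ with $u = t(i) = 2$, $v = t(j) = 0$; then $\ann_R(u,v) = 0$ so $B$ is noncommutative everywhere locally, and $k = vi - uj = -2j$ gives $J(B) = 2\Z j$. Here $\End_R(J(B) \subset M)$ consists of all $\left(\begin{smallmatrix} a & b \\ 0 & d \end{smallmatrix}\right)$ (row vectors, right action), whereas the image of right multiplication is only those with $a \equiv d \pmod 2$ and $b \equiv 0 \pmod 2$, an index-$4$ sublattice. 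So the rank count fails at the prime $2$, and the map is not onto.

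The obstacle you flag in the other roundtrip is likewise more than a patching technicality. Even when $J/I$ is invertible, the recovered pair is canonically $(\Hom_R(J/I, I) \subset \Hom_R(J/I, J))$, i.e.\ the original flag twisted by $(J/I)^\vee$; as flags these are isomorphic only when $J/I \cong R$. (In the non-saturated case above, the backward construction applied to $(2\Z j \subset \Z^2)$ returns the algebra with $(u',v')$ equivalent to $(1,0)$, since annihilating $2\Z j$ is the same as annihilating $\Z j$ --- not the original $B$.) So neither composite is the identity with the maps as written, and the argument needs more than gluing to close.
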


\begin{exm}
If $M=R^2 \to R$ is the map $e_1 \mapsto 1$ and $e_2 \mapsto 0$, then the above correspondence realizes the associated algebra $B$ as isomorphic to the upper-triangular matrices in $M_2(R)$.
\end{exm}


\end{document}